\newcommand{\myauthor}{Benjamin Antieau and  Gebriele Vezzosi}
\newcommand{\mytitle}{Remarks on the Hochschild-Kostant-Rosenberg theorem in
characteristic p}
\title{A remark on the Hochschild-Kostant-Rosenberg theorem in characteristic $p$}
\author{Benjamin Antieau\footnote{Benjamin Antieau was supported
by NSF Grant DMS-1552766.}  $\,$ and Gabriele Vezzosi}
\date{\today}
\newcommand{\df}[1]{{\bf #1} \index{#1}}
\definecolor{todo}{rgb}{1,0,0}
\definecolor{conditional}{rgb}{0,1,0}
\definecolor{e-mail}{rgb}{0,.40,.80}
\definecolor{reference}{rgb}{.20,.60,.22}
\definecolor{mrnumber}{rgb}{.80,.40,0}
\definecolor{citation}{rgb}{0,.40,.80}
\let\oldmarginpar\marginpar
\renewcommand\marginpar[1]{\-\oldmarginpar[\raggedleft\footnotesize #1]%
{\raggedright\footnotesize #1}}
\newcommand{\Cscr}{\mathcal{C}}
\newcommand{\Dscr}{\mathcal{D}}
\newcommand{\Escr}{\mathcal{E}}
\newcommand{\Fscr}{\mathcal{F}}
\newcommand{\Hscr}{\mathcal{H}}
\newcommand{\Lscr}{\mathcal{L}}
\newcommand{\Mscr}{\mathcal{M}}
\newcommand{\Oscr}{\mathcal{O}}
\newcommand{\Tscr}{\mathcal{T}}
\newcommand{\Xscr}{\mathcal{X}}
\newcommand{\D}{\mathrm{D}}
\newcommand{\E}{\mathrm{E}}
\renewcommand{\H}{\mathrm{H}}
\newcommand{\K}{\mathrm{K}}
\renewcommand{\L}{\mathrm{L}}
\newcommand{\PP}{\mathds{P}}
\newcommand{\QQ}{\mathds{Q}}
\newcommand{\op}{\mathrm{op}}
\newcommand{\ev}{\mathrm{ev}}
\newcommand{\coev}{\mathrm{coev}}
\renewcommand{\geq}{\geqslant}
\renewcommand{\leq}{\leqslant}
\newcommand{\Cat}{\mathrm{Cat}}
\DeclareMathOperator{\Ext}{Ext}
\newcommand{\HH}{\mathrm{HH}}
\newcommand{\Map}{\mathrm{Map}}
\newcommand{\MapSp}{\underline{\mathrm{Ma}}\mathrm{p}}
\newcommand{\ShMap}{\Mscr\mathrm{ap}}
\newcommand{\ShExt}{\Escr\mathrm{xt}}
\newcommand{\Mod}{\mathrm{Mod}}
\newcommand{\Perf}{\mathrm{Perf}}
\newcommand{\Ind}{\mathrm{Ind}}
\DeclareMathOperator{\Spec}{Spec}
\newcommand{\we}{\simeq}
\newcommand{\iso}{\cong}
\theoremstyle{plain}
\newtheorem{theorem}{Theorem}[section]
\newtheorem*{theorem*}{Theorem}
\newtheorem{lemma}[theorem]{Lemma}
\newtheorem{proposition}[theorem]{Proposition}
\newtheorem{corollary}[theorem]{Corollary}
\newtheoremstyle{named}{}{}{\itshape}{}{\bfseries}{.}{.5em}{#1 \thmnote{#3}}
\theoremstyle{named}
\theoremstyle{definition}
\newtheorem{example}[theorem]{Example}
\newtheorem{question}[theorem]{Question}
\newtheorem{remark}[theorem]{Remark}
\begin{document}

\maketitle

\begin{abstract}
    \noindent
    We prove a Hochschild-Kostant-Rosenberg decomposition theorem for smooth
    proper schemes $X$ in characteristic $p$ when $\dim X\leq p$. The best known previous result
    of this kind, due to Yekutieli, required $\dim X<p$. Yekutieli's result
    follows from the observation that the denominators appearing in the classical proof of HKR do not divide $p$ when $\dim
    X<p$. Our extension to $\dim X=p$ requires a homological
    fact: the Hochschild homology of a smooth proper scheme is self-dual.

    \paragraph{Key Words.} HKR theorems, duality, characteristic $p$.

    \paragraph{Mathematics Subject Classification 2010.}
    \href{http://www.ams.org/mathscinet/msc/msc2010.html?t=14Fxx&btn=Current}{14F10},
    \href{http://www.ams.org/mathscinet/msc/msc2010.html?t=16Exx&btn=Current}{16E40},
    \href{http://www.ams.org/mathscinet/msc/msc2010.html?t=19Dxx&btn=Current}{19D55}.
%     Secondary:
%     \href{http://www.ams.org/mathscinet/msc/msc2010.html?t=16Pxx&btn=Current}{16P40},
%     \href{http://www.ams.org/mathscinet/msc/msc2010.html?t=18Exx&btn=Current}{18E10},
%     \href{http://www.ams.org/mathscinet/msc/msc2010.html?t=55Pxx&btn=Current}{55P43}.
\end{abstract}

\section{Introduction}

The classical Hochschild-Kostant-Rosenberg theorem
of~\cite{hochschild-kostant-rosenberg} states that if $k$ is a commutative ring
and $R$ is a smooth commutative $k$-algebra, then there is a natural isomorphism
$\Omega_{R/k}^*\iso\HH_*(R/k)$ of graded-commutative $R$-algebras. In fact, \emph{when $k$
is a $\QQ$-algebra}, 
this isomorphism lifts to the level of complexes,
giving a natural quasi-isomorphism $\HH(R/k)\we\bigoplus_t\Omega_{R/k}^t[t]$. Here,
$\HH(R/k)$ denotes Hochschild chains, any one of the natural complexes
computing Hochschild homology, and
$\bigoplus_t\Omega_{R/k}^t[t]$ is viewed as a complex with zero differential. In
particular, we see that the Hochschild chains are naturally formal for smooth
affine schemes over characteristic $0$ fields.

Swan extends the HKR decomposition in~\cite{swan}*{Corollary~2.6} to smooth quasi-projective schemes in
characteristic $0$ using related work of
Gerstenhaber-Schack~\cite{gerstenhaber-schack}.\footnote{Often these authors are
more concerned with Hochschild cohomology, or Hochschild cochains. The
results typically dualize without trouble.} Swan's work implies in particular that there are
canonical decompositions $$\HH_n(X/k)\iso\bigoplus_{t-s=n}\H^s(X,\Omega_{X/k}^t)$$ for all
$n$. Using the fact that Hochschild
homology for all commutative $k$-algebras is determined by its values on smooth
$k$-algebras, the HKR decomposition was extended to all commutative
$k$-algebras (still in characteristic $0$) in work of Buchweitz and Flenner~\cite{buchweitz-flenner},
Schuhmacher~\cite{schuhmacher}, and To\"en and Vezzosi~\cite{tv-simplicial}.
One obtains a natural decomposition
\begin{equation}\label{eq:tv}
    \HH(R/k)\we\bigoplus_{t\geq 0}\L\Lambda^t\left(\L_{R/k}\right)[t],
\end{equation}
where $\L_{R/k}$
is the cotangent complex and $\L\Lambda^t$ is the derived
functor of the $t$th exterior power. Note that Hochschild homology is not
typically formal in the non-smooth case but that the differentials are all
supported in the cotangent complex direction and not in the exterior algebra
direction. Of course, there is a global version of this decomposition for schemes as well.

Much less is known in characteristic $p$. The main result to date is due to Yekutieli
who proves in~\cite{yekutieli}*{Theorem~4.8} that there is a natural HKR quasi-isomorphism in characteristic $p$ for
smooth schemes of dimension less than $p$. To make this precise, we let $\underline{\HH}_X$ denote
the complex of quasi-coherent sheaves on $\Oscr_X$ computing Hochschild homology. In other
words, if $U\subseteq X$ is an affine open subscheme, then
$\underline{\HH}_X(U)\we\HH(U/k)$. One model for this complex is
$\Delta_X^*(\Oscr_{\Delta_X})$,
where $\Delta_X:X\rightarrow X\times_kX$ is the diagonal morphism, $\Delta^*_X$ is the
derived pullback functor,\footnote{Here and elsewhere we mean derived functors unless
specified otherwise.} and $\Oscr_{\Delta_X}$ is
the structure sheaf of the diagonal inside $X\times_kX$.

Yekutieli proved that if $k$ is a commutative ring and if
$X\rightarrow\Spec k$ is a smooth morphism of relative dimension $d$, then
$$\underline{\HH}_X\we\bigoplus_{t=0}^d\Omega_{X/k}^t[t]$$ as complexes of sheaves on $X$ under the assumption
that $d!$ is invertible in $k$. Again, $\bigoplus\Omega^t_{X/k}[t]$ is viewed
as a complex of sheaves with zero differential. This implies that
there are natural isomorphisms
$\HH_n(X/k)\iso\bigoplus_{s-t=n}\H^s(X,\Omega^t_{X/k})$ for all $n$ under the assumption that $d!$ is invertible in $k$.

\begin{question}\label{q:hkr}
    Is there an HKR theorem for smooth schemes in characteristic $p$?
\end{question}

There are multiple ways that this question might be answered. The explicit
isomorphism constructed in~\cite{hochschild-kostant-rosenberg} does not extend in general to
smooth schemes $X$ in
characteristic $p$ (unless $\dim X<p$). However, if $R$ is \emph{any} smooth smooth $k$-algebra, no
matter what the dimension, then there is an HKR-like
quasi-isomorphism.\footnote{Using simplicial commutative rings, one may show that the
quasi-isomorphism in the proposition can be chosen to respect multiplicative structures by
showing that $\HH(R/k)$ is weakly equivalent to the free simplicial commutative
$R$-algebra on $\Omega^1_{R/k}[1]$. This is a way of saying that $\HH(R/k)$ is formal as a
simplicial commutative ring.}
% Note however that this
% statement makes sense only in simplicial commutative rings. If $R$ is a
% commutative $k$-algebra, then $\HH(R/k)$ admits the structure of a simplicial
% commutative $R$-algebra. Indeed, it is the geometric realization of the cyclic
% bar construction $[t]\mapsto R^{\otimes t+1}$. There are multiple indexing
% conventions. We choose the one with face map
% $$d_i(r_0\otimes\cdots\otimes r_t)=\begin{cases}
%     r_0\otimes\cdots r_ir_{i+1}\cdots \otimes r_t   &   \text{if $0\leq i\leq t-1$,}\\
%     r_tr_0\otimes\cdots\otimes r_{t-1}          &   \text{if $i=t$.}
% \end{cases}$$
% View these as commutative $R$-algebras by the action on the right. And
% degeneracy maps the usual insertion maps. In this case, we can let $R$ act on
% the left and this simplicial $k$-algebra inherits the structure of a simplicial
% commutative $k$-algebra. If $P$ is a projective $R$-module, then we can view
% $P[1]$ as a complex with zero differential. Let $\Gamma(P)$ be the associated
% simplicial $R$-module by the Dold-Kan correspondence, and let $R[\Gamma(P[1])]$
% denote the free simplicial commutative $R$-algebra on $\Gamma(P[1])$. The
% homotopy groups of this ring are naturally isomorphic to the graded exterior
% algebra on $P$.

\begin{proposition}\label{prop:formality}
    Let $k$ be a commutative ring and let $R$ be a smooth commutative $k$-algebra. Then,
    there is a quasi-isomorphism $\bigoplus_t\Omega_{R/k}^t[t]\we\HH(R/k)$.
\end{proposition}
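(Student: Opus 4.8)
The plan is to deduce the statement from two facts, neither of which is sensitive to the characteristic: the classical computation of the homology of $\HH(R/k)$, recalled in the introduction, together with the soft observation that a complex with projective homology is formal.

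First I would invoke the classical Hochschild-Kostant-Rosenberg theorem \cite{hochschild-kostant-rosenberg}: for $R$ smooth over a commutative ring $k$ one has $\HH_n(R/k)\iso\Omega^n_{R/k}$ for all $n$. Since $R/k$ is smooth, $\Omega^1_{R/k}$ is a finitely generated projective $R$-module, hence so is every $\Omega^n_{R/k}=\Lambda^n_R\Omega^1_{R/k}$; and $\Omega^n_{R/k}=0$ for $n>\dim R$. Thus a complex of Hochschild chains for $R/k$ is a connective, homologically bounded complex of $R$-modules all of whose homology modules are finitely generated projective.

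Second I would record the elementary fact that \emph{any} complex $C$ of $R$-modules whose homology modules $H_n(C)$ are all projective is formal. Writing $Z_n\subseteq C_n$ and $B_n\subseteq Z_n$ for the cycles and boundaries, projectivity of $H_n(C)=Z_n/B_n$ yields a section $s_n\colon H_n(C)\to Z_n$ of the quotient map. Regarded as maps into $C_n$, the $s_n$ have image in the cycles, so they assemble into a morphism of complexes $\bigoplus_n H_n(C)[n]\to C$ out of the homology complex equipped with the zero differential; by construction this morphism induces the identity on $H_n$ for every $n$, hence is a quasi-isomorphism. Applying this with $C=\HH(R/k)$ and $H_n=\Omega^n_{R/k}$ produces the desired quasi-isomorphism $\bigoplus_t\Omega^t_{R/k}[t]\we\HH(R/k)$.

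The one thing to be careful about is to resist building the quasi-isomorphism out of the explicit antisymmetrization map of \cite{hochschild-kostant-rosenberg}: that map divides by $(\dim R)!$ and genuinely fails in characteristic $p$ once $\dim R\geq p$ --- exactly the phenomenon the rest of the paper has to overcome. The point is that mere formality, as opposed to a canonical or multiplicative trivialization, involves no denominators and is forced by the projectivity of the Hochschild homology modules. (If one wants the quasi-isomorphism to respect multiplicative structure, one instead identifies $\HH(R/k)$ with the free simplicial commutative $R$-algebra on $\Omega^1_{R/k}[1]$, as in the footnote above, but that refinement is not needed here.)
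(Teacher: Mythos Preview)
Your proposal is correct and follows essentially the same approach as the paper: both argue that the Hochschild homology modules $\Omega^t_{R/k}$ are projective (since $R/k$ is smooth) and then invoke the general fact that a complex with projective homology is formal, obtained by lifting each homology module into the cycles. Your write-up simply spells out in more detail what the paper states in two sentences, and your closing remarks about the antisymmetrization map and multiplicativity echo exactly the discussion surrounding the proposition and its footnote.
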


\begin{proof}
    This is a special case of a more general fact: any complex with projective
    homology is formal. A quasi-isomorphism is obtained by choosing maps from
    the homology into the complex, which can always be done thanks to
    projectivity.
%     We only give a sketch of the proof. Because $R$ is commutative, we can
%     equip $\HH(R/k)$ with the structure of a simplicial commutative
%     $R$-algebra. Using the HKR isomorphism $\Omega_{R/k}^*\iso\HH_*(R/k)$
%     of~\cite{hochschild-kostant-rosenberg}, we see that $\HH_1(R/k)\iso\Omega_{R/k}^1$.
%     Because $\Omega^1_{R/k}$ is projective (since $R$ is smooth over $k$), we obtain a
%     morphism $\Omega^1_{R/k}[1]\rightarrow \HH(R/k)$ in the derived category of $R$-modules.
%     This induces a natural map from the free simplicial commutative ring on
%     $\Omega^1_{R/k}[1]$ to $\HH(R/k)$. We obtain a map of simplicial commutative rings
%     $\bigoplus_t\Omega_{R/k}^t[t]\rightarrow\HH(R/k)$. Moreover, it induces an isomorphism in degrees $0$
%     and $1$ and both sides are free graded-commutative $R$-algebras on degree $1$. It
%     follows that it is a quasi-isomorphism, as desired.
\end{proof}

The crucial point in the proof above is that we must choose a lift to get our map
$\Omega^1_{R/k}[1]\rightarrow\HH(R/k)$. Without characteristic or dimension assumptions, it
is not known how to make this choice natural in $R$, which would be necessary to extend the
result to schemes.\footnote{The issue is the same as in the gap in the proof of the main
theorem of~\cite{mccarthy-minasian}. In the proof of Theorem 6.1, they argue that their HKR
theorem is true \'etale locally and hence it is true globally. However, for this argument to
work, they must give a globally defined map.}

Given the Hochschild-Kostant-Rosenberg theorem, it is not hard to prove that the Hochschild
homology sheaves $\Hscr\Hscr_t$ are canonically isomorphic to $\Omega_X^t$ for smooth
schemes $X$. In particular, there is a natural local-global spectral sequence
$$\E_2^{s,t}=\H^s(X,\Omega_X^t)\Rightarrow\HH_{t-s}(X).$$
We say that the \df{weak HKR theorem holds for $X$ over $k$} if this spectral sequence degenerates at the
$\E_2$-page. We say that the \df{strong HKR theorem holds for $X$ over $k$} if $\underline{\HH}_X$ is
formal as a complex of sheaves; i.e., if there is a quasi-isomorphism
$$\bigoplus_t\Omega^t_{X/k}[t]\we\underline{\HH}_X.$$ Evidently, the strong HKR theorem for
$X$ implies the weak HKR theorem for $X$.

Summarizing the past work in this language, we know by~\cite{hochschild-kostant-rosenberg}
and Proposition~\ref{prop:formality} that the strong HKR theorem holds for
$X\rightarrow\Spec k$ smooth affine. By Yekutieli ~\cite{yekutieli}*{Theorem~4.8}, we also know that the strong HKR theorem
holds for $X$ over $k$ when $X\rightarrow\Spec k$ is smooth of relative dimension $d$ and
$d!$ is invertible in $k$. When $k$ is a field of characteristic $p$, this condition is the
same as asking for $\dim X<p$.

Our main results establish the strong HKR theorem for
smooth proper schemes $X$ with $\dim X\leq p$ over characteristic $p$ fields.

\begin{theorem}\label{thm:main}
    Suppose that $k$ is a field of characteristic positive $p$ and that $X$ is a smooth proper
    $k$-scheme of dimension at most $p$. Then, the weak HKR theorem holds for $X$.
    Specifically, if $\dim X=p$, then for each $n$ there is a canonical short exact sequence
    $$0\rightarrow\H^{p-n}(X,\Omega_X^p)\rightarrow\HH_n(X/k)\rightarrow\bigoplus_{t=0}^{p-1}\H^{t-n}(X,\Omega^{t})\rightarrow 0,$$
    which is split (but possibly not canonically split).
\end{theorem}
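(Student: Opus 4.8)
The plan is to reduce the statement to the degeneration of the local-to-global (Hodge-to-de Rham-style) spectral sequence $\E_2^{s,t}=\H^s(X,\Omega_X^t)\Rightarrow\HH_{t-s}(X/k)$, and to extract the degeneration from a dimension count combined with Poincaré/Serre duality on $X$ and the self-duality of Hochschild homology. First I would recall that, by the Hochschild-Kostant-Rosenberg theorem and Proposition~\ref{prop:formality}, the Hochschild homology sheaves $\Hscr\Hscr_t(X/k)$ are canonically identified with $\Omega_{X/k}^t$ for $X$ smooth, so that the spectral sequence above exists and abuts to $\HH_*(X/k)$; since $\dim X=p$ the only sheaves appearing are $\Omega^0,\dots,\Omega^p$, and $X$ being proper of dimension $p$ means $\H^s(X,\Omega^t)$ vanishes unless $0\le s\le p$. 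The first differential out of the $\E_2$-page is $d_2\colon\E_2^{s,t}\to\E_2^{s+2,t-1}$; I would note by a bidegree bookkeeping that the only potentially nonzero higher differentials at all involve the extreme columns $t=p$ (as source) and $t=0$ (as target), because every differential $d_r$ for $r\ge 2$ strictly decreases $t$ and strictly increases $s$, and $s$ is confined to $[0,p]$ while $t$ is confined to $[0,p]$; so the only room for nonzero differentials is between the edge pieces. This already gives the structure of the claimed short exact sequence once degeneration is established: the subobject is the image of $\H^{p-n}(X,\Omega^p)$ (the column $t=p$) and the quotient is $\bigoplus_{t=0}^{p-1}\H^{t-n}(X,\Omega^t)$.

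Next, to prove degeneration I would use self-duality of Hochschild homology of a smooth proper scheme, i.e.\ the perfectness of the pairing $\HH(X/k)\otimes_k\HH(X/k)\to k$ coming from the Calabi-Yau/smooth-proper structure (equivalently, $\HH(X/k)$ is a dualizable $k$-module and the Hochschild homology of the diagonal bimodule is self-dual, as emphasized in the abstract). Concretely, this gives $\HH_n(X/k)^\vee\iso\HH_{-n}(X/k)$, but combined with the HKR identification of the associated graded and with Serre duality $\H^s(X,\Omega^t)^\vee\iso\H^{p-s}(X,\Omega^{p-t})$, one sees that the abutment has the same $k$-dimension as $\bigoplus_{t-s=n}\H^s(X,\Omega^t)$. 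Indeed, Serre duality gives $\dim_k\H^{t-n}(X,\Omega^t)=\dim_k\H^{p-t+n}(X,\Omega^{p-t})$, and summing over $t$ shows $\sum_{t}\dim_k\H^{t-n}(X,\Omega^t)$ is symmetric under $n\mapsto -n$; the self-duality of $\HH$ forces $\dim_k\HH_n(X/k)=\dim_k\HH_{-n}(X/k)$. The point is then that the spectral sequence can only lose dimension upon passing to $\E_\infty$, and a differential $d_r$ from column $t=p$ to column $t=0$ at total degree $n$ would decrease $\dim_k\HH_n$ by the rank of that differential while the "dual" differential (at total degree $-n$, going the other way in the Serre-dual picture) would have to have the same rank; I would argue that the duality pairing on $\HH$ is compatible with the filtration up to a sign-reversal of the filtration degree, so that a nonzero differential would violate the numerical equality $\dim_k\HH_n=\dim_k\HH_{-n}$ unless it vanishes. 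This is the crux, and it is where the hypothesis $\dim X\le p$ (rather than $<p$) is exactly what is needed: for $\dim X<p$ there are no possible differentials at all by the bidegree argument, recovering Yekutieli, while for $\dim X=p$ there is exactly one possible family of differentials and duality kills it.

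Finally, having established $\E_2=\E_\infty$, the only nonzero pieces contributing to $\HH_n(X/k)$ in filtration are $\E_\infty^{s,t}=\H^s(X,\Omega^t)$ with $t-s=n$, and the filtration has a single "jump" separating the $t=p$ piece from the rest; this yields the short exact sequence
$$0\to\H^{p-n}(X,\Omega_X^p)\to\HH_n(X/k)\to\bigoplus_{t=0}^{p-1}\H^{t-n}(X,\Omega^t)\to 0.$$
To see it is (non-canonically) split one only needs that the quotient, being a finite-dimensional $k$-vector space, is free, so the sequence of $k$-modules splits; the lack of a canonical splitting is because there is no canonical HKR decomposition map in characteristic $p$ (as discussed after Proposition~\ref{prop:formality}). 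I expect the main obstacle to be making precise the compatibility of the self-duality pairing with the local-to-global filtration — i.e.\ checking that Serre duality on $X$ and Hochschild self-duality are matched so that the induced pairing on $\E_\infty$ is perfect and filtration-reversing — since this is the step that actually forces the higher differentials to vanish rather than merely constraining their ranks. The bidegree bookkeeping and the splitting are routine; the duality compatibility is the heart of the argument.
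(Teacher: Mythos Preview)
There is a genuine gap in the proposal. Your bidegree bookkeeping does not, by itself, confine the possible differentials to the edges. In the local--global spectral sequence $\E_2^{s,t}=\H^s(X,\Omega^t_{X/k})\Rightarrow\HH_{t-s}(X/k)$ the differential $d_r$ has bidegree $(r,r-1)$, so it \emph{increases} $t$ rather than decreasing it; and even setting the sign aside, the mere constraint $0\leq s,t\leq p$ does nothing to rule out, say, $d_2:\H^0(X,\Omega^2_{X/k})\to\H^2(X,\Omega^3_{X/k})$ when $p\geq 3$. Your claim that only the extreme rows $t=0$ and $t=p$ can be involved is simply not a consequence of the shape of the page.

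What actually restricts the differentials is a fact you never invoke: the truncation $\tau_{\leq p-1}\underline{\HH}_X$ splits \emph{naturally} as $\bigoplus_{t=0}^{p-1}\Omega^t_{X/k}[t]$. This is Lemma~\ref{lem:decomposition}; it is proved using the antisymmetrization maps $\underline{\HH}_X\to\Omega^t_{X/k}[t]$, whose composite with the HKR isomorphism on homotopy sheaves is multiplication by $t!$ and hence is invertible for $t\leq p-1$. Granting this, the only possibly nonzero differentials are those \emph{landing in} the row $t=p$, so every source sits in strictly positive total degree and every target in non-negative total degree; in particular no differential touches any term of negative total degree. Now the dimension count is clean: Serre duality gives $\dim_k\E_2^{a}=\dim_k\E_2^{-a}$, self-duality of $\HH(X/k)$ gives $\dim_k\HH_a=\dim_k\HH_{-a}$, and a nonzero differential out of total degree $a\geq 1$ would strictly shrink $\dim_k\HH_a$ while leaving $\dim_k\HH_{-a}=\dim_k\E_2^{-a}$ untouched, a contradiction. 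The "main obstacle" you flag --- making the Mukai pairing compatible with the filtration so that differentials come in Serre-dual pairs --- is therefore unnecessary; the paper never uses it, and without Lemma~\ref{lem:decomposition} the symmetry argument alone cannot exclude a Serre-dual pair of differentials whose effects on the dimension count cancel. The short exact sequence likewise comes directly from the cofiber sequence $\Omega^p_{X/k}[p]\to\underline{\HH}_X\to\bigoplus_{t=0}^{p-1}\Omega^t_{X/k}[t]$ furnished by that lemma, not merely from the filtration on the abutment.
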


The main idea in the proof of this theorem is the use of self-duality for $\HH(X/k)$ in the local-global spectral sequence 
$\H^{s}(X,\Omega^t_{X/k})\Rightarrow\HH_{t-s}(X/k)$. A similar move occurs
in~\cite{deligne-illusie}, where the authors use the compatibility of Serre duality with
the Cartier isomorphism and a trace argument to boost their result on degeneration of the
Hodge-de Rham spectral sequence for smooth $X/k$ with $\dim X<p$ to smooth $X/k$ with $\dim
X\leq p$. See~\cite{deligne-illusie}*{Corollaire~2.3}.

\begin{example}
    The theorem in particular implies the HKR decomposition for smooth projective surfaces
    in characteristic $2$. This answers the explicit form of Question~\ref{q:hkr} asked by
    Daniel Pomerleano on {\ttfamily mathoverflow} in~\cite{mo:16960}.
\end{example}

The theorem has the following corollary.

\begin{corollary}\label{cor:main}
    Suppose that $k$ is a field of positive characteristic $p$ and that $X$ is a smooth proper
    $k$-scheme of dimension at most $p$. Then, the strong HKR theorem holds for $X$.
\end{corollary}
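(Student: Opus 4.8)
The plan is to reduce to $\dim X=p$, split $\underline{\HH}_X$ formally below degree $p$, and kill the single remaining obstruction using Theorem~\ref{thm:main} together with Serre duality. If $\dim X<p$ the strong HKR theorem is Yekutieli's, since then $(\dim X)!$ is invertible in $k$; so assume $\dim X=p$. Because $\Hscr\Hscr_t\cong\Omega^t_{X/k}$, the complex $\underline{\HH}_X$ has locally free cohomology sheaves in degrees $0,\dots,p$; in particular $\tau_{\geq p}\underline{\HH}_X\simeq\Omega^p_{X/k}[p]$, and there is a truncation triangle
$$\Omega^p_{X/k}[p]\longrightarrow\underline{\HH}_X\longrightarrow\tau_{\leq p-1}\underline{\HH}_X\xrightarrow{\ \kappa\ }\Omega^p_{X/k}[p+1].$$
By the argument proving Yekutieli's theorem, but applied only in degrees $<p$ — where the relevant factorials are units — the truncation $\tau_{\leq p-1}\underline{\HH}_X$ is formal, i.e.\ quasi-isomorphic to $\bigoplus_{t=0}^{p-1}\Omega^t_{X/k}[t]$. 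Hence $\underline{\HH}_X\simeq\mathrm{fib}(\kappa)$, and the strong HKR theorem holds for $X$ exactly when $\kappa=0$.

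Spelling out the group containing $\kappa$ and using the isomorphism $(\Omega^t_{X/k})^\vee\otimes\Omega^p_{X/k}\cong\Omega^{p-t}_{X/k}$, which is valid because $\dim X=p$, gives
$$\kappa\in\bigoplus_{t=0}^{p-1}\Ext_X^{\,p+1-t}\bigl(\Omega^t_{X/k},\Omega^p_{X/k}\bigr)=\bigoplus_{t=0}^{p-1}\H^{p+1-t}\bigl(X,\Omega^{p-t}_{X/k}\bigr).$$
Since $X$ is smooth and proper of dimension $p$, Serre duality identifies $\H^{p+1-t}(X,\Omega^{p-t}_{X/k})$ with the dual of $\H^{t-1}(X,\Omega^t_{X/k})$, compatibly with cup products and the trace. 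The $t$-th component of $\kappa$ acts on hypercohomology by cup product, inducing a map $\H^{t-1}(X,\Omega^t_{X/k})\to\H^{p}(X,\Omega^p_{X/k})$; since $\tau_{\leq p-1}\underline{\HH}_X$ is already formal, this map is precisely the differential $d_{p-t+1}$ of the local--global spectral sequence on the $E_{p-t+1}$-page. By Theorem~\ref{thm:main} that spectral sequence degenerates, so all these differentials vanish; pairing against $\H^{t-1}(X,\Omega^t_{X/k})$ and invoking the perfectness of Serre duality, we conclude that each component of $\kappa$ vanishes. Therefore $\kappa=0$ and $\underline{\HH}_X\simeq\bigoplus_{t=0}^{p}\Omega^t_{X/k}[t]$.

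I expect the second paragraph to be the main obstacle: identifying the $k$-invariant $\kappa$ with differentials of the local--global spectral sequence, and checking that the Serre-duality identification is the one that makes this detection work. This is exactly where both the properness of $X$, through Serre duality, and the degeneration of the local--global spectral sequence from Theorem~\ref{thm:main}, through the self-duality of $\HH(X/k)$, are genuinely used; the formality of $\tau_{\leq p-1}\underline{\HH}_X$ in the first paragraph, being a phenomenon purely in dimensions $<p$, is more routine and handled by Yekutieli's methods.
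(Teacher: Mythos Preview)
Your argument is correct and reaches the same conclusion, but the mechanism differs from the paper's. Both proofs reduce to splitting the cofiber sequence
\[
\Omega^p_{X/k}[p]\longrightarrow\underline{\HH}_X\longrightarrow\bigoplus_{t=0}^{p-1}\Omega^t_{X/k}[t],
\]
and both rely on Theorem~\ref{thm:main} together with a duality statement. The paper does not analyze the boundary class $\kappa$ directly; instead it produces a retraction $\underline{\HH}_X\rightarrow\omega_{X/k}[p]$ by showing that the local--global spectral sequence computing $\pi_*\MapSp_X(\underline{\HH}_X,\omega_{X/k}[p])$ degenerates. This follows from Grothendieck duality, which identifies the abutment with $\HH(X/k)^\vee\simeq\HH(X/k)$, and then a dimension count against Theorem~\ref{thm:main}. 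Your approach is arguably more direct and uses only classical Serre duality on $X$: the components $\kappa_t\in\H^{p+1-t}(X,\Omega^{p-t}_{X/k})$ are killed because their Serre pairing with $\H^{t-1}(X,\Omega^t_{X/k})$ is exactly the connecting map in the long exact sequence of the cofiber sequence, which Theorem~\ref{thm:main} already asserts to vanish. In fact you can simplify your second paragraph: you do not need to match $\kappa$ with higher differentials of the spectral sequence at all. The short exact sequences in Theorem~\ref{thm:main} are precisely the statement that the connecting homomorphism of the cofiber sequence is zero on hypercohomology; restricted to the summand $\H^{t-1}(X,\Omega^t_{X/k})$ this map \emph{is} cup product with $\kappa_t$ by definition of the boundary, and then non-degeneracy of Serre duality finishes. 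What the paper's route buys is that it avoids ever naming $\kappa$ or invoking perfectness of the cup-product pairing on forms, at the cost of passing through Grothendieck duality and the categorical self-duality of $\HH(X/k)$; what your route buys is a cleaner, more elementary endgame once Theorem~\ref{thm:main} is in hand.
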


We briefly review Hochschild homology in Section~\ref{sec:hochschild} and we give the proofs of
the theorem and corollary as well as some more examples in Section~\ref{sec:proofs}.

\begin{remark}\label{rem:lifting}
    It is interesting to wonder at the connection between the weak HKR theorem and degeneration
    of the Hodge-de Rham spectral sequence. Given that there is no liftability hypothesis in
    Theorem~\ref{thm:main}, it is not at all clear if or when there should be a relation.
    We can however make the following weak statement. If $X$ is a smooth scheme over a characteristic $p$ field $k$ that lifts
    to a smooth proper map $\Xscr\rightarrow\Spec R$ where $R$ is any discrete valuation
    ring with characteristic zero fraction field and residue field $k$, and if the
    cohomology groups $\H^s(X,\Omega_{\Xscr/R}^t)$ are $p$-torsion-free for all $s,t$, then the
    weak HKR theorem holds for $X$ over $t$. Indeed, the differentials must all be
    $p$-torsion by the HKR theorem in characteristic $0$, but
    the groups are $p$-torsion-free. Hence, the differentials all vanish for the spectral
    sequence of $\Xscr$ over $R$. But, the hypotheses also imply that the spectral sequence of
    $X$ over $k$ is the mod $p$ reduction of that for $\Xscr$ over $R$.
\end{remark}

\begin{example}\label{ex:ci}
    By lifting to characteristic $0$ and~\cite{sga7-2}*{Expos\'e~XI, Th\'eor\`eme 1.5}, the
    remark implies that the weak HKR theorem holds for smooth complete
    intersections in projective space. Despite the sparsity of the local-global
    spectral sequence in that case, this statement is not entirely trivial. For example, for
    such a $5$-fold $X$ in characteristic $2$, the differential
    $d_2:\H^2(X,\Omega_{X/k}^3)\rightarrow\H^4(X,\Omega_X^4)$ could, for all we
    would know otherwise, be non-zero. In fact, we can also see that this
    differential must
    be zero by using the duality arguments used in the proof of
    Theorem~\ref{thm:main} the pullback map from the local-global spectral
    sequence for $\PP^n$ to $X$.
\end{example}

Finally, let us say a word about multiplicative structures.
In~\cite{tv-simplicial}, To\"en and Vezzosi prove the strong HKR theorem in
characteristic zero and show that the equivalence of~\eqref{eq:tv} is naturally
$S^1$-equivariant and multiplicative. The proof of Corollary~\ref{cor:main}
does in particular induce an equivalence between the sheaf of simplicial
commutative rings $\underline{\HH}_X$ and the
sheaf of free simplicial commutative rings on $\Omega^1_{X/k}$ and there is a
corresponding formality result upon taking global sections. We emphasize
again that we do not know how to make this $S^1$-equivariant or functorial in $X$, or even if that
is possible.

\paragraph{Acknowledgments.} Many people have wondered about the existence of
HKR-type results in characteristic $p$ and we have talked to many people about
the subject, including Damien Calaque, David Gepner, Ezra Getzler, Michael
Gr\"ochenig, M\'arton Hablicsek, Akhil Mathew, and Boris Tsygan.

BA was first asked this question by Yank\i\, Lekili in the fall of 2014 and
thanks him for many conversations about this and many other topics. Special
thanks also go to Bhargav Bhatt who suggested to BA the idea of bringing duality into
play, which had already been considered by GV. The
resulting conversations led directly to the present note.

\section{Hochschild homology}\label{sec:hochschild}

Let $k$ be a commutative ring and let $\Cat_k$ denote the $\infty$-category of
small idempotent complete pretriangulated $k$-linear dg categories up to derived Morita invariance. 
The $\infty$-category $\Cat_k$ is
equivalent to the Dwyer-Kan localization of the category of small $k$-linear dg
categories (or even small \emph{flat} $k$-linear dg categories)
at the class of $k$-linear derived Morita equivalences.
Objects of $\Cat_k$ will be called \df{$k$-linear categories} for
short.\footnote{Equivalently, $\Cat_k$ is the $\infty$-category of small
idempotent complete $k$-linear stable $\infty$-categories.}

Let $\Dscr(k)$ denote the derived $\infty$-category of $k$.
The homotopy category of $\Dscr(k)$ is a triangulated
category equivalent to $\D(k)$, the derived category of $k$.
We let $\HH(-/k):\Cat_k\rightarrow\Dscr(k)$ be the Hochschild homology functor as
studied in~\cite{keller-cyclic}.\footnote{To be precise, $\HH(-/k)$ is the derived functor of
Keller's mixed complex construction (obtained by taking flat resolutions of dg categories).
Since the mixed complex construction inverts derived Morita equivalences
(by~\cite{keller-cyclic}*{Section~1.5}) it descends to a map
$\Cat_k\rightarrow\Dscr(k)$
of $\infty$-categories.} The following proposition is well-known
(see~\cite{cisinski-tabuada}*{Example~8.9}
or~\cite{bgt2}*{Corollary~6.9}); we give a sketch of the proof for the reader's
convenience.

\begin{proposition}
    The functor $\HH(-/k):\Cat_k\rightarrow\Dscr(k)$ is symmetric monoidal.
\end{proposition}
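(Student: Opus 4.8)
The plan is to exhibit $\HH(-/k)$ as a composite of symmetric monoidal functors, or else to identify it with a known symmetric monoidal construction. First I would recall that $\Cat_k$ carries a symmetric monoidal structure given by the Lurie tensor product of small idempotent-complete $k$-linear stable $\infty$-categories, and that $\Dscr(k)$ carries the usual derived tensor product over $k$; both are the symmetric monoidal structures implicitly referred to in the statement. The cleanest route is to use the identification $\HH(\Cscr/k)\we \Cscr\otimes_{\Cscr\otimes_k\Cscr^{\op}}\Cscr$, i.e. Hochschild homology as the trace, or as the colimit of the cyclic bar construction $\Delta^{\op}\to\Dscr(k)$ built from a flat $k$-linear dg model of $\Cscr$. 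Equivalently, for $\Cscr=\Perf(A)$ with $A$ a flat dg $k$-algebra, $\HH(A/k)\we A\otimes^{\mathbf{L}}_{A\otimes_k A^{\op}}A$ is computed by the usual cyclic bar complex $B^{\cy}(A)$, and the assignment $A\mapsto B^{\cy}(A)$ is visibly lax symmetric monoidal: there is a natural shuffle (Eilenberg–Zilber) map $B^{\cy}(A)\otimes_k B^{\cy}(A')\to B^{\cy}(A\otimes_k A')$, and on $A=k$ one gets $B^{\cy}(k)\we k$, giving the unit constraint.

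The key steps, in order, are as follows. (1) Fix the symmetric monoidal structures: Lurie tensor product on $\Cat_k$ (with unit $\Perf(k)$) and derived $\otimes_k$ on $\Dscr(k)$; recall that $\Cscr\otimes_k\Dscr$ is generated under colimits and retracts by objects $c\boxtimes d$, and that dualizable/compact generation plays well with $\HH$. (2) Reduce to the generators: since $\HH(-/k)$ preserves filtered colimits and finite colimits (it is an additive invariant, or one checks directly from the bar construction) and since every object of $\Cat_k$ is a retract of one of the form $\Perf(A)$ for a flat dg algebra $A$, it suffices to produce coherent equivalences $\HH(\Perf(A)\otimes_k\Perf(A')/k)\we\HH(A/k)\otimes_k\HH(A'/k)$ compatibly. (3) Identify $\Perf(A)\otimes_k\Perf(A')\we\Perf(A\otimes_k A')$, so the claim becomes the classical Künneth isomorphism $\HH(A\otimes_k A'/k)\we\HH(A/k)\otimes_k\HH(A'/k)$, which follows from the Eilenberg–Zilber theorem applied to the cyclic bar complexes (here one uses flatness of $A$, $A'$ over $k$ so that $\otimes_k$ needs no further derivation). (4) Promote these equivalences to a symmetric monoidal structure on the functor: rather than checking an infinite tower of coherences by hand, invoke that $\HH(-/k)$ is the realization of the cyclic bar functor, a symmetric monoidal functor out of the category of flat dg $k$-algebras (the cyclic bar construction is symmetric monoidal on the nose via shuffles), followed by geometric realization $\Fun(\Delta^{\op},\Dscr(k))\to\Dscr(k)$, which is symmetric monoidal because $\Delta^{\op}$ is sifted; then pass to the Dwyer–Kan/Morita localization, noting the localization functor $\dg\text{-}\mathrm{cat}_k\to\Cat_k$ is symmetric monoidal and $\HH$ descends. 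Alternatively, cite~\cite{bgt2}*{Corollary~6.9} and~\cite{cisinski-tabuada}*{Example~8.9}, where exactly this statement is proved via the universal property of $\Cat_k$ as a symmetric monoidal $\infty$-category corepresenting additive invariants.

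The main obstacle is step (4): assembling the pointwise Künneth equivalences into a genuine symmetric monoidal functor of $\infty$-categories, i.e. producing all the higher coherence data rather than just the binary and unit isomorphisms. The honest fix is not to build this by hand but to use a structural input — either the symmetric monoidality of the cyclic bar construction at the level of strict dg algebras (where shuffle maps are strictly coherent) combined with the symmetric monoidality of sifted colimits, or the universal property of $(\Cat_k,\otimes_k)$ among localizing/additive invariants, which forces any such colimit-preserving functor landing in a presentably symmetric monoidal target to be canonically (lax, then strong once Künneth holds on generators) symmetric monoidal. Since this is a well-known result and the excerpt only asks for a sketch, I would spell out steps (1)–(3) in a sentence or two each and handle (4) by the citation together with the remark that the cyclic bar construction is manifestly symmetric monoidal before localization.
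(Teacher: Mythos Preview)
Your proposal is correct and follows essentially the same approach as the paper: both arguments compute Hochschild homology via the cyclic bar (mixed) complex on flat models, produce the symmetric monoidal structure from the Eilenberg--Zilber shuffle map, and then descend along the Morita localization. The only cosmetic difference is that the paper works directly with small flat $k$-linear dg categories and Keller's mixed complex rather than first reducing to objects of the form $\Perf(A)$; your additional remarks on handling the higher coherences (via siftedness of $\Delta^{\op}$ or the universal property of $\Cat_k$) go a bit beyond what the paper spells out but are entirely in the same spirit.
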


\begin{proof}
    We will indicate how the functor from small flat $k$-linear dg categories to chain complexes
    over $k$ given by taking the mixed complex as in~\cite{keller-cyclic}*{Section~1.3} is
    symmetric monoidal. Indeed, given a small flat $k$-linear dg category $\Cscr$, the
    mixed complex $C(\Cscr)$ of~\cite{keller-cyclic} is the total complex of the bicomplex
    associated to the simplicial chain complex $C_\bullet(\Cscr)$ with $n$th term
    $$\bigoplus_{(x_n,\cdots,x_0)}\Cscr(x_n,x_0)\otimes_k\Cscr(x_{n-1},x_n)\otimes_k\Cscr(x_{n-2},x_{n-1})\otimes_k\cdots\otimes_k\Cscr(x_1,x_2)\otimes_k\Cscr(x_0,x_1),$$
    where the sum is over all $(n+1)$-tuples of objects of $\Cscr$. The differentials are as
    usual for the cyclic bar complex. (See~\cite{keller-cyclic}*{Section 1.3}.)
    Given a second small flat $k$-linear dg category, we see that there is a natural map of
    simplicial chain complexes
    $C_\bullet(\Cscr)\otimes_k C_\bullet(\Dscr)\rightarrow C_\bullet(\Cscr\otimes_k\Dscr)$
    obtained using the symmetric monoidal structure on chain complexes. This map is
    a level-wise quasi-isomorphism.
    Taking the associated total complexes (and using the
    shuffle product), we obtain a natural quasi-isomorphism $C(\Cscr)\otimes_k C(\Dscr)\rightarrow
    C(\Cscr\otimes_k\Dscr)$ giving a symmetric monoidal structure. Compare with the statement of the Hochschild homological
    Eilenberg-Zilber theorem (see~\cite{loday}*{Theorem~4.2.5}).
    Since $\Cat_k$ is the localization of small $k$-linear dg categories at the
    $k$-linear derived Morita equivalences, and since the mixed complex functor
    $C$ inverts $k$-linear derived Morita equivalences
    by~\cite{keller-cyclic}*{Section 1.5}, the proposition follows.
\end{proof}

Let $\Cscr$ be a $k$-linear category. In this case, $\Ind(\Cscr)$ is dualizable
in $\Mod_{\Dscr(k)}(\Pr^\L)$. Let
$\Dscr(k)\xrightarrow{\coev_{\Cscr}}\Ind(\Cscr)\otimes\Ind(\Cscr^\op)$ be the
coevaluation map and let
$\Ind(\Cscr^{\op})\otimes\Ind(\Cscr)\xrightarrow{\ev_{\Cscr}}\Dscr(k)$ be the
evaluation map. We say that $\Cscr$ is \df{smooth} if the coevaluation map
preserves compact objects. We say that $\Cscr$ is \df{proper} if the evaluation
map preserves compact objects. Note that $\Cscr$ is smooth and proper if and
only if it is dualizable in $\Cat_k$. See~\cite{toen-vaquie}*{Definition~2.4}
or~\cite{ag}*{Lemma~3.7}. If $\Cscr$ is dualizable, then the dual is equivalent to the
opposite dg category $\Cscr^{\op}$.

\begin{corollary}\label{cor:perfection}
    If $\Cscr$ is a smooth and proper $k$-linear category, then $\HH(\Cscr/k)$
    is perfect as a complex over $k$.
\end{corollary}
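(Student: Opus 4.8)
The plan is to exploit the symmetric monoidal structure on $\HH(-/k)$ established in the preceding proposition together with the characterization of smooth-and-proper $k$-linear categories as the dualizable objects of $\Cat_k$. A symmetric monoidal functor sends dualizable objects to dualizable objects, so the strategy is: (1) recall that $\Cscr$ smooth and proper means $\Cscr$ is dualizable in $\Cat_k$, with dual $\Cscr^{\op}$; (2) apply the symmetric monoidal functor $\HH(-/k)\colon\Cat_k\to\Dscr(k)$ to conclude that $\HH(\Cscr/k)$ is dualizable in $\Dscr(k)$; (3) identify the dualizable objects of $\Dscr(k)$ with the perfect complexes. Steps (1) and (2) are essentially formal once the previous results are in hand.

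First I would invoke the discussion immediately preceding the corollary: $\Cscr$ is smooth and proper if and only if it is dualizable as an object of the symmetric monoidal $\infty$-category $\Cat_k$, with dual given by $\Cscr^{\op}$, the unit being the $k$-linear category $\Perf(k)$ (equivalently the dg category $k$ regarded as a one-object category). Then, since $\HH(-/k)\colon\Cat_k\to\Dscr(k)$ is symmetric monoidal by the proposition above, and since symmetric monoidal functors preserve duality data — they carry evaluation and coevaluation maps of $\Cscr$ to evaluation and coevaluation maps for the pair $(\HH(\Cscr/k),\HH(\Cscr^{\op}/k))$, and carry the unit $\Perf(k)$ to the unit $k\in\Dscr(k)$ up to equivalence — it follows that $\HH(\Cscr/k)$ is a dualizable object of $\Dscr(k)$ with dual $\HH(\Cscr^{\op}/k)$.

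It remains to identify dualizable objects of $\Dscr(k)$. An object $M\in\Dscr(k)$ is dualizable with respect to the derived tensor product if and only if it is a perfect complex, i.e. equivalent to a bounded complex of finitely generated projective $k$-modules; this is standard (for instance, one direction is immediate since perfect complexes are built from the dualizable object $k$ under finite colimits and retracts, and the converse follows because a dualizable object is a retract of $M\otimes_k M^\vee\otimes_k M$ applied to the coevaluation, which forces finiteness). Hence $\HH(\Cscr/k)$ is perfect over $k$, as claimed.

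The only step requiring genuine care — rather than a citation — is the assertion that the duality data transports correctly through the monoidal functor; concretely, one must observe that $\HH(-/k)$ applied to $\ev_{\Cscr}$ and $\coev_{\Cscr}$ produces maps satisfying the triangle identities in $\Dscr(k)$, which is automatic for any (lax) symmetric monoidal functor provided it is strong monoidal, as ours is. So in effect the corollary is the composite of two black boxes: "symmetric monoidal functors preserve dualizability" and "dualizable $=$ perfect in $\Dscr(k)$," and I do not anticipate a real obstacle; the main thing to get right in writing it up is to match the notation for the unit object and to cite the equivalence between dualizability in $\Cat_k$ and smoothness-plus-properness correctly (e.g. \cite{toen-vaquie}*{Definition~2.4} or \cite{ag}*{Lemma~3.7}, as already referenced).
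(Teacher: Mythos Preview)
Your proposal is correct and follows exactly the same route as the paper's proof: smooth and proper means dualizable in $\Cat_k$, the symmetric monoidal functor $\HH(-/k)$ preserves dualizability, and dualizable objects in $\Dscr(k)$ are precisely the perfect complexes. The only difference is that you spell out in more detail why symmetric monoidal functors preserve duality data and why dualizable equals perfect in $\Dscr(k)$, whereas the paper dispatches all of this in two sentences.
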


\begin{proof}
    In this case, $\Cscr$ is dualizable in $\Cat_k$. Since symmetric monoidal
    functors preserve dualizable objects, the corollary follows at once from
    the fact that the dualizable objects of $\Dscr(k)$ are precisely the
    perfect complexes.
\end{proof}

\begin{remark}
    It is important to use $\Cat_k$ as opposed to the more familiar $\infty$-category
    $\mathrm{dgAlg}_k$ of dg $k$-algebras in the corollary because
    smooth and proper $k$-linear categories are typically \emph{not} dualizable
    in $\mathrm{dgAlg}_k$.
\end{remark}

\section{Proofs}\label{sec:proofs}

We give the proofs of Theorem~\ref{thm:main} and Corollary~\ref{cor:main} in this section
after a couple more preliminaries. The next statement is well-known over fields (see for
example~\cite{orlov}). The proof over a general commutative ring $k$ is the same, so we omit
it.

\begin{proposition}
    Let $k$ be a commutative ring. If $X\rightarrow\Spec k$ is a smooth and
    proper morphism of schemes, then $\Perf_X$ is dualizable as a $k$-linear category.
\end{proposition}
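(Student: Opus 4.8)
The plan is to verify directly that $\Perf_X$ is both smooth and proper as a $k$-linear category, since by the criterion recalled above this is the same as being dualizable in $\Cat_k$ (the dual then being $\Perf_X^{\op}$, which is in any case equivalent to $\Perf_X$ through the $\Oscr_X$-linear dual $\Fscr\mapsto\Fscr^\vee$). First I would set up the usual dictionary: for $X$ quasi-compact and separated over $k$ one has $\Ind(\Perf_X)\we\mathrm{QCoh}(X)$, the derived $\infty$-category of quasi-coherent $\Oscr_X$-modules, and $\Ind(\Perf_X^{\op})\we\mathrm{QCoh}(X)$ as well; moreover $\Perf_X\otimes_k\Perf_X\we\Perf_{X\times_k X}$, so that $\Ind(\Perf_X^{\op})\otimes_{\Dscr(k)}\Ind(\Perf_X)\we\mathrm{QCoh}(X\times_k X)$ (flatness of $X\to\Spec k$ ensures the fibre product needs no derived correction). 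Under this dictionary the coevaluation $\Dscr(k)\to\mathrm{QCoh}(X\times_k X)$ carries $k$ to $\Delta_{X,*}\Oscr_X$ up to a twist by a shift of the invertible sheaf $\omega_{X/k}$ coming from the identification $\Perf_X^{\op}\we\Perf_X$, and the evaluation $\mathrm{QCoh}(X\times_k X)\to\Dscr(k)$ is, up to the same twist, the functor $\Fscr\mapsto\Gamma(X,\Delta_X^*\Fscr)\we\Gamma(X\times_k X,\Fscr\otimes\Delta_{X,*}\Oscr_X)$, where $\Delta_X\colon X\to X\times_k X$ is the diagonal and all functors are derived. Since $\omega_{X/k}$ is a line bundle, the twist is harmless for what follows, so one may as well take the kernel to be $\Delta_{X,*}\Oscr_X$.

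For \emph{smoothness} one must show that the coevaluation preserves compact objects, i.e. that $\Delta_{X,*}\Oscr_X$ is perfect on $X\times_k X$. This is exactly where smoothness of $X\to\Spec k$ enters: a smooth morphism has regular diagonal, so $\Delta_X$ is a regular closed immersion with conormal sheaf $\Omega^1_{X/k}$ locally free of finite rank. Hence, Zariski-locally on $X\times_k X$, the ideal sheaf of the diagonal is generated by a regular sequence, $\Delta_{X,*}\Oscr_X$ is computed by the corresponding finite Koszul complex of finite free modules, and is therefore perfect. So $\Perf_X$ is smooth.

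For \emph{properness} one must show that the evaluation preserves compact objects, i.e. that $\Gamma(X,\Delta_X^*\Fscr)\in\Perf_k$ for every $\Fscr\in\Perf_{X\times_k X}$. Now $\Delta_X^*$ preserves perfect complexes, as does the line-bundle twist, so it is enough to know that $\Gamma(X,-)$ sends $\Perf_X$ into $\Perf_k$. This follows from properness: since $X\to\Spec k$ is proper and smooth --- in particular proper, flat, and of finite presentation --- the derived pushforward along it preserves perfect complexes, by the finiteness theorem of SGA~6 over a Noetherian base, extended to arbitrary $k$ by the standard reduction (spread $X$ out to a smooth proper scheme over a finitely generated subring of $k$ and base change). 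Hence $\Perf_X$ is proper, and combining the two paragraphs it is smooth and proper, so dualizable in $\Cat_k$. I expect the real content to be purely bookkeeping: matching the abstract coevaluation and evaluation maps with their Fourier--Mukai descriptions and keeping track of $X$ versus $X^{\op}$ and of the Serre twist. The two geometric inputs --- that the diagonal of a smooth morphism is a regular immersion, and that proper pushforward of finite Tor-amplitude preserves perfection --- are standard and do not care whether $k$ is a field, which is why the argument is ``the same'' as over a field.
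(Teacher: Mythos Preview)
Your argument is correct and is the standard one: identify the evaluation and coevaluation for $\Ind(\Perf_X)$ with Fourier--Mukai functors whose kernel is $\Delta_{X,*}\Oscr_X$, then use that smoothness of $X/k$ makes the diagonal a regular immersion (so $\Delta_{X,*}\Oscr_X$ is perfect) and that properness plus flatness makes $\R\Gamma(X,-)$ preserve perfect complexes. The paper in fact gives no proof at all: it states that the result is well-known over fields, cites Orlov, and asserts that the proof over a general commutative ring $k$ is the same. What you have written is essentially that omitted proof, including the Noetherian-approximation step needed to get the preservation of perfection by proper pushforward over arbitrary $k$. One small comment: the Serre twist you mention is not actually needed if you keep track of $\Perf_X^{\op}$ rather than immediately passing to $\Perf_X$ via $\Fscr\mapsto\Fscr^\vee$; the abstract coevaluation for $\Ind(\Cscr)$ lands in $\Ind(\Cscr)\otimes\Ind(\Cscr^{\op})$ and is always given by the identity bimodule, which here is $\Delta_{X,*}\Oscr_X$ on the nose.
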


\begin{corollary}\label{cor:hhdual}
    In the situation above, $\HH(X/k)$ is dualizable as a complex over $k$
    with dual the Hochschild homology of the smooth and proper $k$-linear
    category $\left(\Perf_X\right)^{\op}$.
\end{corollary}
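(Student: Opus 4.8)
The plan is to deduce the corollary directly from the symmetric monoidality of the Hochschild homology functor established in Section~\ref{sec:hochschild}, together with the preceding proposition. Recall the elementary fact that a symmetric monoidal functor $F\colon(\Cscr,\otimes)\to(\Dscr,\otimes)$ preserves dualizability and, more precisely, transports duality data: if $c$ is dualizable with dual $c^\vee$, coevaluation $\coev\colon\1\to c\otimes c^\vee$, and evaluation $\ev\colon c^\vee\otimes c\to\1$, then $F(c)$ is dualizable with dual $F(c^\vee)$, the structure maps being the images of $\coev$ and $\ev$ under $F$ (composed with the structure isomorphisms of $F$ and the equivalence $F(\1)\we\1$). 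In particular a dual of $F(c)$ is canonically $F(c^\vee)$, not merely abstractly equivalent to it.

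First I would invoke the proposition above: since $X\to\Spec k$ is smooth and proper, $\Perf_X$ is a dualizable object of $\Cat_k$, and, as recalled in Section~\ref{sec:hochschild}, its dual is the opposite dg category $(\Perf_X)^{\op}$. Applying the symmetric monoidal functor $\HH(-/k)\colon\Cat_k\to\Dscr(k)$ then shows that $\HH(\Perf_X/k)$ is dualizable over $k$ with dual $\HH\bigl((\Perf_X)^{\op}/k\bigr)$. Since the dualizable objects of $\Dscr(k)$ are exactly the perfect complexes, this in particular recovers Corollary~\ref{cor:perfection}; the extra content is the explicit description of the dual.

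The only point that requires a word is the identification $\HH(X/k)\we\HH(\Perf_X/k)$ --- that the Hochschild homology of the scheme $X$, namely $\R\Gamma$ of the complex of sheaves $\underline{\HH}_X$ from the introduction, agrees with the categorical Hochschild homology of the $k$-linear category $\Perf_X$. If one takes the latter as the definition of the former, there is nothing to do; otherwise, for $X$ quasi-compact and quasi-separated over $k$ the comparison is standard, using the equivalence $\Perf_{X\times_k X}\we\Perf_X\otimes_k\Perf_X$ (valid here by smoothness and properness), under which the diagonal bimodule corresponds to $\Oscr_{\Delta_X}$, and I would simply cite it. Granting this, the corollary follows from the previous paragraph. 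The main obstacle, such as it is, is then purely bookkeeping: one must be sure that the dual produced by transporting the duality data along $\HH(-/k)$ genuinely is $\HH$ applied to $(\Perf_X)^{\op}$ and not some opaque dual complex --- but that is exactly what naturality of duals under symmetric monoidal functors gives, so there is nothing deeper to check.
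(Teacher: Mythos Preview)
Your proof is correct and follows essentially the same approach as the paper: apply the symmetric monoidality of $\HH(-/k)$ to the dualizable object $\Perf_X\in\Cat_k$ (whose dual is $(\Perf_X)^{\op}$), exactly as in the proof of Corollary~\ref{cor:perfection}. The paper's proof is a one-line reference to that corollary, whereas you have spelled out the transport of duality data and the identification $\HH(X/k)\we\HH(\Perf_X/k)$ more carefully, but the underlying argument is the same.
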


\begin{proof}
    This follows from Corollary~\ref{cor:perfection} taking into account that the dualizable
    objects of $\Dscr(k)$ are precisely the perfect complexes.
\end{proof}

The $k$-linear categories of the form $\Perf_X$ are very special: they are equivalent to
their own opposites. Below, if $\Escr$ and $\Fscr$ are
complexes of quasi-coherent sheaves on $X$, we write $\MapSp_X(\Escr,\Fscr)$
for the mapping spectrum, a spectrum whose homotopy groups
$\pi_i\MapSp_X(\Escr,\Fscr)$ are given by
$\Ext^{-i}_X(\Escr,\Fscr)$. Similarly, $\ShMap_X(\Escr,\Fscr)$ is the complex
of quasi-coherent sheaves on $X$ whose homotopy sheaves are
$\pi_i\ShMap_X(\Escr,\Fscr)\iso\ShExt^{-i}_X(\Escr,\Fscr)$.

\begin{lemma}\label{lem:selfdual}
    Let $X$ be a $k$-scheme and let $\Lscr$ be a line bundle on $X$.
    Then, there is an equivalence $\Perf_X\we\Perf_X^{\op}$ of $k$-linear categories
    obtained by $\Fscr\mapsto\ShMap_X(\Fscr,\Lscr)$.
\end{lemma}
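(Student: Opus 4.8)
The plan is to check that the assignment $\Fscr\mapsto\ShMap_X(\Fscr,\Lscr)$, applied object-wise and using the natural dg-enrichment of $\Perf_X$, defines a $k$-linear dg functor $\Perf_X\to\Perf_X^{\op}$ and that it is an equivalence. First I would observe that this functor is well-defined on the level of objects: if $\Fscr$ is a perfect complex, then $\ShMap_X(\Fscr,\Lscr)$ is again perfect, since $\Lscr$ is invertible (hence perfect) and internal mapping complexes out of perfect complexes into perfect complexes are perfect; concretely, $\ShMap_X(\Fscr,\Lscr)\we\Fscr^{\vee}\otimes_{\OX}\Lscr$ where $\Fscr^\vee=\ShMap_X(\Fscr,\OX)$. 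Functoriality is built in: $\ShMap_X(-,\Lscr)$ is contravariant and $k$-linear at the level of the enriched hom-complexes, so it genuinely lands in $\Perf_X^{\op}$.

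Next I would verify that the functor is an equivalence of $k$-linear categories, which in $\Cat_k$ means it is fully faithful and essentially surjective (equivalently, a derived Morita equivalence). For full faithfulness, the key point is that $\ShMap_X(-,\Lscr)$ takes the mapping complex $\ShMap_X(\Fscr,\Gscr)$ to $\ShMap_X(\ShMap_X(\Gscr,\Lscr),\ShMap_X(\Fscr,\Lscr))$, and using the tensor-hom identifications above this is $\ShMap_X(\Gscr^\vee\otimes\Lscr,\Fscr^\vee\otimes\Lscr)\we\ShMap_X(\Gscr^\vee,\Fscr^\vee)\we\ShMap_X(\Fscr,\Gscr)$, the last equivalence being biduality for perfect complexes, $\Fscr^{\vee\vee}\we\Fscr$. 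One should of course check that these identifications assemble to the map induced by the functor, not merely an abstract equivalence, but this is the standard compatibility of the evaluation/coevaluation structure. Essential surjectivity (in fact, that the functor is its own quasi-inverse up to the twist) follows from applying $\ShMap_X(-,\Lscr)$ twice: $\ShMap_X(\ShMap_X(\Fscr,\Lscr),\Lscr)\we(\Fscr^\vee\otimes\Lscr)^\vee\otimes\Lscr\we\Fscr\otimes\Lscr^{-1}\otimes\Lscr\we\Fscr$, so the functor is an equivalence with quasi-inverse itself.

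The step I expect to require the most care is not any single computation but making the construction genuinely functorial and $k$-linear at the $\infty$-categorical (or dg) level, rather than just on homotopy categories. The cleanest route is to fix a flat dg model of $\Perf_X$ — say, the dg category of perfect complexes of flat (or h-flat/cofibrant) $\OX$-modules — and note that on such a model $\underline{\mathrm{Hom}}_{\OX}(-,\Lscr)$ is a strict dg functor, contravariant and $k$-linear on hom-complexes, which then descends to the desired map in $\Cat_k$; the equivalences displayed above are then witnessed by explicit natural transformations of dg functors (the biduality map and the evaluation isomorphism), all of which are classical. Alternatively, one can phrase everything in terms of the dualizability of $\Perf_X$ in $\Cat_k$ established in the preceding proposition: the dual of $\Perf_X$ is $\Perf_X^{\op}$, and tensoring the canonical duality datum by the invertible object $\Lscr$ produces exactly the claimed self-duality. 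I would present the dg-model argument as the main line and remark on the dualizability reformulation.
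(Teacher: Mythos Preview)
Your main argument is correct: using that perfect complexes are dualizable objects of $\mathrm{D}_{\mathrm{qc}}(X)$, so that $\ShMap_X(\Fscr,\Lscr)\we\Fscr^\vee\otimes\Lscr$ and biduality $\Fscr^{\vee\vee}\we\Fscr$ holds, one sees directly that $\ShMap_X(-,\Lscr)$ is a self-inverse equivalence. The paper takes a terser route: since the functor is globally defined, whether it is an equivalence can be tested Zariski-locally (both full faithfulness and the biduality transformation being an isomorphism are local conditions), and on affines the statement reduces to the familiar fact that $\Hom_R(-,R)$ is a self-duality on $\Perf_R$, twisted by an invertible module. Your approach has the virtue of being explicit and self-contained; the paper's buys brevity and avoids writing out the tensor--hom gymnastics, at the cost of leaving the affine case as ``obvious.'' In substance the two arguments are close: the object-level dualizability you invoke is itself a Zariski-local fact.

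One caution about your closing alternative: you propose to rephrase everything via dualizability of $\Perf_X$ in $\Cat_k$, citing ``the preceding proposition.'' That proposition requires $X$ to be smooth and proper over $k$, whereas the lemma is stated for an arbitrary $k$-scheme $X$. So that reformulation does not apply in the generality claimed; keep it, if at all, only as a remark in the smooth proper case.
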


\begin{proof}
    The claim can be checked Zariski locally on $X$, hence for affine schemes,
    where it is obvious.
\end{proof}

\begin{corollary}\label{cor:selfdual}
    If $X$ is a smooth and proper $k$-scheme, then the complex of $k$-modules $\HH(X/k)$ is self-dual.
    That is, the evaluation map is a non-degenerate pairing $\HH(X/k)\otimes_k\HH(X/k)\rightarrow k$.
\end{corollary}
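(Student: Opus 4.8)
The plan is to deduce this formally from Corollary~\ref{cor:hhdual} and Lemma~\ref{lem:selfdual}. By Corollary~\ref{cor:hhdual} (together with Corollary~\ref{cor:perfection}), the complex $\HH(X/k)$ is dualizable, hence perfect, as a complex of $k$-modules, and its $k$-linear dual is canonically identified with $\HH\big((\Perf_X)^{\op}/k\big)$. So it suffices to produce an equivalence $\HH(X/k)\we\HH\big((\Perf_X)^{\op}/k\big)$ of complexes over $k$.

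For this I would apply Lemma~\ref{lem:selfdual} with the trivial line bundle $\Lscr=\Oscr_X$, obtaining an equivalence $\Perf_X\we\Perf_X^{\op}$ in $\Cat_k$ given by $\Fscr\mapsto\ShMap_X(\Fscr,\Oscr_X)$. Since $\HH(-/k)\colon\Cat_k\to\Dscr(k)$ is a functor (indeed symmetric monoidal), applying it to this equivalence gives $\HH(X/k)=\HH(\Perf_X/k)\we\HH(\Perf_X^{\op}/k)$, and composing with the identification above yields a self-duality equivalence $\HH(X/k)\we\HH(X/k)^{\vee}$. Transporting the canonical evaluation map $\HH(X/k)^{\vee}\otimes_k\HH(X/k)\to k$ along this equivalence then produces the asserted pairing $\HH(X/k)\otimes_k\HH(X/k)\to k$, which is non-degenerate because its adjoint is, by construction, the self-duality equivalence $\HH(X/k)\we\HH(X/k)^{\vee}$.

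I expect no serious obstacle here: the content is entirely contained in the two cited results, and the only point requiring a little care is checking that the chain of equivalences respects the duality data, which is automatic since every functor involved is $k$-linear and symmetric monoidal. It is worth noting that $\Lscr=\Oscr_X$ already suffices for the bare self-duality statement; a more refined choice of $\Lscr$ (such as the canonical bundle, so as to bring in Serre duality) is only needed when one wants the pairing to interact suitably with the Hodge cohomology grading, as in the proof of Theorem~\ref{thm:main}.
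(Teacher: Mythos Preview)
Your proof is correct and follows exactly the route the paper takes: the paper's proof reads in its entirety ``This is an immediate consequence of Corollary~\ref{cor:hhdual} and Lemma~\ref{lem:selfdual},'' and you have simply spelled out that immediate consequence. Your choice of $\Lscr=\Oscr_X$ and the remark about why no specific line bundle is needed here are both apt.
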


\begin{proof}
    This is an immediate consequence of Corollary~\ref{cor:hhdual} and
    Lemma~\ref{lem:selfdual}.
\end{proof}

The existence of such a pairing has been observed in the literature before. For example, it
appears implicitly in Shklyarov~\cite{shklyarov}*{Theorem~1.4} and is studied by
C\u{a}ld\u{a}raru and Willerton in~\cite{caldararu-willerton}, who call it the Mukai pairing.
It also occurs in the preprint~\cite{tv-bloch} of To\"en and Vezzosi.

Our point of departure is to pair the pairing of Corollary \ref{cor:selfdual} with the convergent local-global spectral
sequence
\begin{equation}\label{eq:lg}
    \E_2^{s,t}=\H^{s}(X,\Omega^t_{X/k})\Rightarrow\HH_{t-s}(X/k).
\end{equation}
We will see that this quickly leads to a proof of the main theorem.
We need one more lemma, which is also implied by~\cite{yekutieli}*{Theorem~4.8}.

\begin{lemma}\label{lem:decomposition}
    Let $k$ be a ring in which $p$ acts nilpotently and let $X\rightarrow\Spec
    k$ be a smooth proper morphism.
    Let $\tau_{\leq p-1}\underline{\HH}_X$ denote the $(p-1)$st truncation of
    $\underline{\HH}_X$. Then, there is a natural quasi-isomorphism $$\tau_{\leq
    p-1}\underline{\HH}_X\we\bigoplus_{t=0}^{p-1}\Omega_{X/k}^t[t].$$
\end{lemma}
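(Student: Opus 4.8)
The plan is to run the classical $\lambda$-decomposition of Hochschild homology through homological degree $p-1$, where the only denominators that intervene are the factorials $t!$ with $0\leq t\leq p-1$. These denominators are available in $k$: since $p$ acts nilpotently there is an $N$ with $p^N=0$ in $k$, so every integer prime to $p$ is a unit in $k$, and in particular $t!$ is a unit for every $0\leq t\leq p-1$. Consequently the Eulerian idempotents $e_n^{(i)}\in\ZZf{n!}[\Sigma_n]$ acting on the $n$-th term of the Hochschild complex are defined over $k$ for all $n\leq p-1$; being given by formulas functorial in the ring, they induce operators on $\underline{\HH}_X=\Delta_X^*(\Oscr_{\Delta_X})$ compatible with restriction to affine open subschemes, and they commute with the Hochschild differential. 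See~\cite{loday}*{Chapter~4} and~\cite{gerstenhaber-schack}.

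Applying the idempotents $e_n^{(i)}$ for $n\leq p-1$ then splits the good truncation $\tau_{\leq p-1}\underline{\HH}_X$ as a direct sum $\bigoplus_{i=0}^{p-1}\underline{\HH}_X^{(i)}$ of weight summands, where $\underline{\HH}_X^{(i)}$ is supported in homological degrees $\geq i$. On a smooth affine open $\Spec R\subseteq X$ the classical Hochschild-Kostant-Rosenberg theorem~\cite{hochschild-kostant-rosenberg} gives $\HH_n(R/k)\iso\Omega^n_{R/k}$, and this class is pure of weight $n$; hence the homology sheaves of $\underline{\HH}_X^{(i)}$ vanish except in degree $i$, where $\Hscr_i(\underline{\HH}_X^{(i)})\iso\Omega^i_{X/k}$. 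A complex supported in degrees $\geq i$ with homology only in degree $i$ is canonically quasi-isomorphic, via its truncation map, to that homology sheaf placed in degree $i$; so $\underline{\HH}_X^{(i)}\we\Omega^i_{X/k}[i]$ for each $i\leq p-1$, and therefore $\tau_{\leq p-1}\underline{\HH}_X\we\bigoplus_{t=0}^{p-1}\Omega^t_{X/k}[t]$, naturally in $X$. (Properness plays no role; and when $\dim X\leq p-1$ one may alternatively just quote~\cite{yekutieli}*{Theorem~4.8}, since then $(\dim X)!$ is already invertible in $k$ and the truncation does nothing.)

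The one step that genuinely uses the truncation is the claim that the decomposition above requires inverting only the factorials $t!$ with $t\leq p-1$; for the $\lambda$-idempotent construction this is automatic, as $e_n^{(i)}$ with $n\leq p-1$ lies over $\ZZf{(p-1)!}$, whereas a more hands-on argument would have to rule out that, say, splitting off $\Omega^{p-1}_{X/k}[p-1]$ secretly forces one to split off $\Omega^p_{X/k}[p]$. I do not anticipate real trouble here: the full HKR decomposition is already available affine-locally by Proposition~\ref{prop:formality}, and functoriality of the $\lambda$-operations is precisely what lets one glue it globally within the range where the relevant factorials are units. The hard input of the paper lies not in this lemma but in the duality argument used afterwards to push the weak HKR theorem from $\dim X\leq p-1$ to $\dim X=p$.
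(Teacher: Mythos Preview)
Your approach via the Eulerian idempotents is correct but genuinely different from the paper's. The paper instead uses the antisymmetrization map $\underline{\HH}_X\rightarrow\Omega_{X/k}^t[t]$ of~\cite{loday}*{Section~1.3}, which is defined integrally (no denominators) and is natural in $X$; on homology sheaves this map precomposed with the HKR isomorphism $\Omega_{X/k}^t\xrightarrow{\sim}\Hscr\Hscr_t$ is multiplication by $t!$ (\cite{loday}*{Proposition~1.3.16}), so for $t\leq p-1$ it is an isomorphism and the induced map $\tau_{\leq p-1}\underline{\HH}_X\rightarrow\bigoplus_{t=0}^{p-1}\Omega_{X/k}^t[t]$ is a quasi-isomorphism. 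This is arguably more elementary: one explicit globally defined map, no idempotent machinery, and no need to verify that $e_{p-1}^{(i)}$ preserves $\mathrm{im}(d_p)\subseteq C_{p-1}$ so that the splitting descends to the good truncation (it does, since the $e_n^{(i)}$ are $\ZZf{(p-1)!}$-polynomials in the integrally defined $\lambda$-operations, which commute with $d$; you gloss over this step). On the other hand, your route yields an honest direct-sum decomposition of the truncated complex rather than merely a one-sided quasi-isomorphism, and the naturality is immediate from functoriality of the $\lambda$-operations.
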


\begin{proof}
    In general, for any smooth scheme, there is a natural map of complexes of sheaves
    $\underline{\HH}_X\rightarrow\Omega_{X/k}^t[t]$ constructed in~\cite{loday}*{Section
    1.3}. On smooth affine schemes $X=\Spec R$, this map can be described as taking a
    Hochschild chain $r_0\otimes r_1\otimes\cdots\otimes r_t$ to the
    differential $t$-form $r_0dr_1\cdots dr_t$. This map is \emph{not} the map arising
    in the Hochschild-Kostant-Rosenberg theorem. Rather, there is an isomorphism
    $\Omega_{X/k}^t\rightarrow\Hscr\Hscr_t$ of sheaves. The induced composition
    $\Omega_{X/k}^t\rightarrow\Omega_{X/k}^t$ is multiplication by
    $t!$ by~\cite{loday}*{Proposition~1.3.16}. But, this implies that the induced map
    $\tau_{\leq p-1}\HH_X\rightarrow\bigoplus_{t=0}^{p-1}\Omega_{X/k}^t[t]$ is a
    quasi-isomorphism under the present hypotheses.
\end{proof}

We are now ready to give the proof of the main theorem.

\begin{proof}[Proof of Theorem~\ref{thm:main}]
    If $\dim X<p$, then the theorem follows from Lemma~\ref{lem:decomposition}
    or~\cite{yekutieli}*{Theorem~4.8}. So, assume that $\dim X=p$.
    It follows from Lemma~\ref{lem:decomposition} that the only possibly non-zero differentials in the
    local-global spectral sequence~\eqref{eq:lg} are those hitting
    $\E_r^{s,p}=\H^s(X,\Omega_{X/k}^p)$. These
    groups are only possibly non-zero for $0\leq s\leq p$, and they contribute to
    Hochschild homology in degrees $p,p-1,\ldots,0$. The differential $d_r$ has bidegree
    $(r,r-1)$, which lowers the total degree by $1$. In particular, the only terms that
    support a non-zero differential out must in particular have total degree $1,\ldots,p$.
    By Serre duality,
    $\H^s(X,\Omega_{X/k}^t)\iso\H^{p-s}(X,\Omega_{X/k}^{p-t})^\vee$, the
    $k$-linear dual of $\H^{p-s}(X,\Omega^{p-t}_{X/k})$. In particular, at the
    $\E_2$-page, the sum of the dimensions of total degree $a\geq 1$ is equal to the sum of
    the dimensions of total degree $-a$. If some term of degree $a\geq 1$ supports a
    differential to $\E_r^{s,p}$, then $\dim_k\HH_a(X/k)<\dim_k\HH_{-a}(X/k)$, which
    contradicts Corollary~\ref{cor:selfdual}. It follows that there are no non-zero
    differentials, so that $X$ satisfies the weak HKR theorem over $k$. The last statement
    follows from the fact that
    $$\Omega^p_{X/k}[p]\rightarrow\underline{\HH}(X/k)\rightarrow\bigoplus_{t=0}^{p-1}\Omega_{X/k}^t[t]$$
    is a cofiber sequence and the lack of differentials in~\eqref{eq:lg} implies we
    get short exact sequences in global sections, as desired.
\end{proof}

Now, we prove Corollary \ref{cor:main}.

\begin{proof}[Proof of Corollary~\ref{cor:main}]
    We can again assume that $\dim X=p$.
    By Lemma~\ref{lem:decomposition}, it is enough to construct a map $\underline{\HH}_X\rightarrow\omega_{X/k}[p]$ such that
    the composition
    $\omega_{X/k}[p]\rightarrow\underline{\HH}_X\rightarrow\omega_{X/k}[p]$ is
    the identity. In other words, we are interested in the restriction map
    $$\MapSp_X(\underline{\HH}_X,\omega_{X/k}[p])\rightarrow\MapSp_X(\omega_{X/k}[p],\omega_{X/k}[p]).$$
    This is the map on global sections of the map of (sheaves of) mapping complexes
    $$\ShMap_X(\underline{\HH}_X,\omega_{X/k}[p])\rightarrow\ShMap_X(\omega_{X/k}[p],\omega_{X/k}[p]),$$
    and there is a corresponding map of local-global spectral sequences which is a
    surjection on the $\E_2$-pages since $\Hscr\Hscr_p\iso\omega_{X/k}$. We will be done if
    we show that the local-global spectral sequence
    $$\E_2^{s,t}=\H^s(X,\pi_t\ShMap_X(\underline{\HH}_X,\omega_{X/k}[p]))\Rightarrow\pi_{t-s}\MapSp_X(\underline{\HH}_X,\omega_{X/k}[p])$$
    collapses at the $\E_2$-page. However,
    $\pi_t\ShMap_X(\underline{\HH}_X,\omega_{X/k}[p])\iso\Omega_{X/k}^t$ using
    the natural isomorphisms
    $$\Tscr_{X/k}^{p-t}\otimes\omega_{X/k}\iso\Omega_{X/k}^t,$$ where
    $\Tscr_{X/k}^{p-t}$ denotes the $(p-t)$th exterior power of the tangent
    bundle of $X$ over $k$; By Grothendieck duality,
    $\Map_X(\underline{\HH}_X,\omega_{X/k}[p])\we\HH(X/k)^\vee$, the $k$-dual
    of $\HH(X/k)$. But, by Corollary~\ref{cor:selfdual},
    $\HH(X/k)^\vee\we\HH(X/k)$. By a dimension count and using
    Theorem~\ref{thm:main}, we see that the local-global spectral sequence computing
    $\pi_*\MapSp_X(\underline{\HH}_X,\omega_{X/k}[p])$ does indeed collapse.
\end{proof}

Given the proof above, we ask the following question.

\begin{question}
    Is $\underline{\HH}_X\we\ShMap_X(\underline{\HH}_X,\omega_{X/k}[d])$ when $X$ is a
    smooth proper $k$-scheme of dimension $d$?
\end{question}

\begin{lemma}
    Let $X$ be a smooth proper $d$-dimensional scheme over a field $k$ of characteristic $p>0$.
    If the weak HKR theorem holds for $X$, then
    $\underline{\HH}_X\we\ShMap_X(\underline{\HH}_X,\omega_{X/k}[d])$.
\end{lemma}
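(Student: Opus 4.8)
The plan is to exhibit a canonical self-pairing on $\underline{\HH}_X$ and check that it is perfect by reducing to the affine case. Recall that $\underline{\HH}_X\we\Delta_X^*\Delta_{X*}\Oscr_X$ is a commutative algebra object in the derived $\infty$-category of quasi-coherent sheaves on $X$; write $m\colon\underline{\HH}_X\otimes_{\Oscr_X}\underline{\HH}_X\to\underline{\HH}_X$ for the multiplication. Since $X$ is smooth of dimension $d$ we have $\Hscr\Hscr_t\iso\Omega^t_{X/k}$ for all $t$ and $\Hscr\Hscr_t=0$ for $t>d$, so the Postnikov truncation furnishes a canonical map $e\colon\underline{\HH}_X\to\tau_{\geq d}\underline{\HH}_X\we\omega_{X/k}[d]$ which on $\pi_d$ is the HKR isomorphism $\Hscr\Hscr_d\iso\omega_{X/k}$. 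Composing yields a pairing
$$\mu\colon\underline{\HH}_X\otimes_{\Oscr_X}\underline{\HH}_X\xrightarrow{m}\underline{\HH}_X\xrightarrow{e}\omega_{X/k}[d],$$
and hence, by adjunction, a natural map $\phi\colon\underline{\HH}_X\to\ShMap_X(\underline{\HH}_X,\omega_{X/k}[d])$. It remains to show $\phi$ is an equivalence.

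Next I would compute the homotopy sheaves of the target. Because each $\Hscr\Hscr_t\iso\Omega^t_{X/k}$ is locally free, the local-to-global $\ShExt$ spectral sequence computing $\pi_*\ShMap_X(\underline{\HH}_X,\omega_{X/k}[d])$ degenerates, and, exactly as in the proof of Corollary~\ref{cor:main}, $\pi_t\ShMap_X(\underline{\HH}_X,\omega_{X/k}[d])\iso\ShMap_X(\Omega^{d-t}_{X/k},\omega_{X/k})\iso\Tscr^{d-t}_{X/k}\otimes\omega_{X/k}\iso\Omega^t_{X/k}$, the last isomorphism being the contraction pairing, which is available because $X$ is smooth of pure dimension $d$. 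Thus both source and target of $\phi$ carry homotopy sheaves $\Omega^t_{X/k}$ concentrated in degree $t$, and it suffices to show $\phi$ induces an isomorphism on each $\pi_t$. This is a statement about sheaves, so it may be checked Zariski-locally; assume then that $X=\Spec R$ is affine and smooth. By Proposition~\ref{prop:formality} we have $\HH(R/k)\we\bigoplus_t\Omega^t_{R/k}[t]$, and the key classical input is that the induced graded multiplication on $\pi_*=\HH_*(R/k)$ is the exterior product under the HKR identification $\HH_*(R/k)\iso\Omega^*_{R/k}$ of~\cite{hochschild-kostant-rosenberg}. Tracing through the construction of $\mu$, the map $\pi_n\mu$ restricted to the summand $\Omega^a_{R/k}\otimes_R\Omega^b_{R/k}$ with $a+b=n$ is the wedge product followed by $\pi_n e$; this vanishes unless $n=d$, in which case it is the perfect pairing $\Omega^a_{R/k}\otimes_R\Omega^{d-a}_{R/k}\to\Omega^d_{R/k}$. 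Hence $\pi_a\phi$ is the adjoint of a perfect pairing of finite locally free modules, so it is an isomorphism, and therefore $\phi$ is an equivalence.

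The step I expect to be the main obstacle is precisely this identification of $\pi_*\mu$ with the wedge-product pairing: one must invoke the multiplicative refinement of HKR and, just as importantly, be sure that $e$ really is the truncation map and not the antisymmetrization map of~\cite{loday}*{Section~1.3}, whose effect on $\pi_d$ is multiplication by $d!$ and hence vanishes precisely when $d=p$ — the case of interest. The weak HKR hypothesis enters when one wants to see this self-duality globally in cohomology rather than only locally: $\phi$ is a morphism of local-global spectral sequences~\eqref{eq:lg} which, by the homotopy-sheaf computation above, is an isomorphism on $\E_2$-pages, so degeneration of the source spectral sequence (the hypothesis) forces degeneration of the target, and on global sections one recovers the Mukai pairing of Corollary~\ref{cor:selfdual} in a form compatible with the Hodge filtration.
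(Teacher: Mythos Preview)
Your construction of the map $e$ is where the argument breaks. For a complex $M$ with homotopy concentrated in degrees $[0,d]$, the Postnikov (or $t$-structure) truncation gives a canonical map $\tau_{\geq d}M\simeq\pi_dM[d]\to M$, not a map $M\to\pi_dM[d]$. In the case at hand the canonical map goes $\omega_{X/k}[d]\to\underline{\HH}_X$; what you need is a \emph{splitting} of this map, and that does not come for free. There is indeed a natural map $\underline{\HH}_X\to\omega_{X/k}[d]$, namely the antisymmetrization map of~\cite{loday}*{Section~1.3} that you mention, but as you yourself note its effect on $\pi_d$ is multiplication by $d!$, hence zero whenever $d\geq p$.

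This is exactly where the weak HKR hypothesis must enter, and the paper uses it precisely here: one runs the argument of Corollary~\ref{cor:main}, replacing $p$ by $d$, to show that the local-global spectral sequence computing $\pi_*\MapSp_X(\underline{\HH}_X,\omega_{X/k}[d])$ degenerates, which then lets one lift the identity in $\Hom_X(\omega_{X/k},\omega_{X/k})$ to an actual map $e\colon\underline{\HH}_X\to\omega_{X/k}[d]$ splitting $\omega_{X/k}[d]\to\underline{\HH}_X$. Once $e$ is in hand, your argument and the paper's coincide: form $\mu=e\circ m$, pass to the adjoint $\phi$, and check on homotopy sheaves using that the HKR isomorphism is multiplicative. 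Your final paragraph misplaces the role of the hypothesis: once $\phi$ is an equivalence of sheaves it is an equivalence, and no further global input is needed; the weak HKR theorem is required already to construct $e$.
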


\begin{proof}
    The proof above of Corollary~\ref{cor:main} applies equally well here to show
    that there is a map $\underline{\HH}_X\rightarrow\omega_{X/k}[d]$ such that
    the composition
    $\omega_{X/k}[d]\rightarrow\underline{\HH}_X\rightarrow\omega_{X/k}[d]$ is the
    identity. Now, consider the composition
    $\underline{\HH}_X\otimes_{\Oscr_X}\underline{\HH}_X\rightarrow\underline{\HH}_X\rightarrow\omega_{X/k}[d]$
    induced by the multiplicative structure on $\underline{\HH}_X$.
    The reader can check that the adjoint map
    $\underline{\HH}_X\rightarrow\ShMap_X(\underline{\HH}_X,\omega_{X/k}[d])$
    is an equivalence using that the multiplication on the homotopy sheaves of
    $\underline{\HH}_X$ agrees with the exterior power multiplication on $\Omega_{X/k}^*$.
\end{proof}

We end the paper with a brief connection to algebraic $K$-theory.

\begin{proposition}
    Let $X$ be a smooth projective $3$-fold over a field $k$ of characteristic
    $2$.  If the image of the first Chern class map
    $c_1:\K_0(X)\rightarrow\H^1(X,\Omega_X^1)$ generates $\H^1(X,\Omega_X^1)$
    as a $k$-vector space, then the weak HKR theorem holds for $X$.
\end{proposition}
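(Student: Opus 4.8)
The plan is to show that when $X$ is a smooth projective $3$-fold over a field $k$ of characteristic $2$, the only possibly non-zero differential in the local-global spectral sequence $\E_2^{s,t} = \H^s(X,\Omega^t_{X/k}) \Rightarrow \HH_{t-s}(X/k)$ is $d_2 \colon \H^{s}(X,\Omega^2_{X/k}) \to \H^{s+2}(X,\Omega^3_{X/k})$ (this is already implicit in the proof of Theorem~\ref{thm:main} with $p = 3$: since $\dim X = 3 = p$, the only differentials hit $\E_r^{s,p}$, and for degree reasons with $p = 3$ the only surviving one is the indicated $d_2$). So it suffices to show $d_2 = 0$. Here is where the $K$-theoretic hypothesis enters: one uses that $d_2$ is a derivation with respect to the product on the spectral sequence, together with the fact that classes coming from $\K_0(X)$ via the Chern character are permanent cycles.

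First I would recall that the local-global spectral sequence~\eqref{eq:lg} is multiplicative: it is the spectral sequence of the filtered object $\underline{\HH}_X$ (filtered by the Postnikov/truncation filtration, whose associated graded pieces are $\Omega^t_{X/k}[t]$ by Lemma~\ref{lem:decomposition} when $t \le p-1$), and $\underline{\HH}_X$ is a sheaf of $\E_\infty$-rings (indeed simplicial commutative rings), so the products on the $\E_2$-page are the cup products $\H^s(X,\Omega^a) \otimes \H^{s'}(X,\Omega^{a'}) \to \H^{s+s'}(X,\Omega^{a+a'})$ coming from wedge product of forms, and all differentials are derivations. Second, I would use that the first Chern class $c_1 \colon \K_0(X) \to \H^1(X,\Omega^1_X)$ factors through the edge map / Chern character into $\HH_0(X/k)$ — more precisely, any class in the image of $c_1$ is the degree-$(1,1)$ part of the Chern character of a $K$-theory class, hence is a permanent cycle in the spectral sequence (it comes from an actual element of $\HH_0(X/k) = \pi_0\MapSp(\mathbf{1}, \underline{\HH}_X)$, namely from $\K_0(X) \to \TP$ or directly $\K_0(X)\to \HH_0(X/k)$). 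So every $\alpha$ in the image of $c_1$ satisfies $d_2(\alpha) = 0$.

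Now combine: by hypothesis the image of $c_1$ spans $\H^1(X,\Omega^1_X)$, so every class of $\H^1(X,\Omega^1_X)$ is a permanent cycle. The key computation is then that every class in $\H^s(X,\Omega^2_X)$ — the source of the only possibly non-zero differential — is expressible in terms of products involving $\H^1(X,\Omega^1_X)$: precisely, in characteristic $2$ on a smooth surface-direction one has $\Omega^2 = \Omega^1 \wedge \Omega^1$, and more relevantly I would use Serre duality, which under the self-duality of Corollary~\ref{cor:selfdual} (or simply Serre duality $\H^s(X,\Omega^2) \cong \H^{3-s}(X,\Omega^1)^\vee$) together with the perfect cup-product pairing $\H^s(X,\Omega^2) \otimes \H^{3-s}(X,\Omega^1) \to \H^3(X,\Omega^3) \cong k$ lets me detect $d_2\alpha$ by pairing against $\H^{3-s}(X,\Omega^1)$. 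For $\beta \in \H^{3-s}(X,\Omega^1)$ a permanent cycle, the Leibniz rule gives $d_2(\alpha\beta) = d_2(\alpha)\beta \pm \alpha\, d_2(\beta) = d_2(\alpha)\beta$ inside $\H^{5-s+1}(X,\Omega^4) = 0$ (since $\dim X = 3$, $\Omega^4 = 0$ and also $\H^{>3}$ vanishes), so $d_2(\alpha)\cup\beta = 0$; as this holds for all $\beta$ spanning $\H^{3-s}(X,\Omega^1)$ and the pairing is perfect, $d_2(\alpha) = 0$. Hence $d_2 = 0$ and the weak HKR theorem holds.

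\textbf{Main obstacle.} The delicate point is the multiplicativity bookkeeping: one must know that the local-global spectral sequence is a spectral sequence of algebras with the expected products on $\E_2$ (wedge of forms) and that $c_1$-classes genuinely lift to permanent cycles with the correct bidegree $(1,1)$ — i.e.\ that the Chern class really is the leading term of a Hochschild-homology class, not just an associated-graded shadow. Establishing the Leibniz rule and the identification of products requires a model for $\underline{\HH}_X$ as a filtered sheaf of ring spectra (which we have, since $\underline{\HH}_X$ is a sheaf of simplicial commutative rings and its truncation filtration is multiplicative), but checking that the pairing used for detection is exactly the product in this spectral sequence — rather than some twist of it — is the step that needs care. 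Everything else is the degree/dimension counting already rehearsed in the proof of Theorem~\ref{thm:main}.
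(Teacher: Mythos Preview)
There is a fundamental misreading: the characteristic is $p=2$, not $p=3$. Since $\dim X=3>p$, Theorem~\ref{thm:main} does not apply and Lemma~\ref{lem:decomposition} only splits $\tau_{\leq 1}\underline{\HH}_X$. Consequently there are many more potentially non-zero differentials than the single family $d_2:\H^s(X,\Omega^2_X)\to\H^{s+2}(X,\Omega^3_X)$ you identify --- for instance $d_2:\H^1(X,\Omega^1_X)\to\H^3(X,\Omega^2_X)$, which is precisely the differential the $c_1$ hypothesis is designed to kill. Your proposal never addresses these. Separately, your Leibniz argument is vacuous as written: for $\alpha\in\H^s(X,\Omega^2_X)$ and $\beta\in\H^{3-s}(X,\Omega^1_X)$ the product $d_2(\alpha)\cup\beta$ lands in $\H^5(X,\Omega^4_X)=0$ automatically, so its vanishing carries no information about $d_2(\alpha)$; the Serre-dual partner of $\H^{s+2}(X,\Omega^3_X)$ is $\H^{1-s}(X,\Oscr_X)$, not $\H^{3-s}(X,\Omega^1_X)$.

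The paper's proof instead uses the $\Oscr_X$-algebra structure on $\underline{\HH}_X$ to make the row $t=0$ consist of permanent cycles, then invokes the trace map $\K_0(X)\to\HH_0(X/k)$ together with the $c_1$ hypothesis to kill $d_2:\H^1(X,\Omega^1_X)\to\H^3(X,\Omega^2_X)$. At that point no differential enters or leaves any term of negative total degree, and the self-duality dimension count from Corollary~\ref{cor:selfdual} (exactly as in the proof of Theorem~\ref{thm:main}) forces all remaining differentials to vanish.
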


\begin{proof}
    It is not hard using that we can make $\underline{\HH}_X$ into an
    $\Oscr_X$-algebra to see that there are no non-zero differentials leaving
    the terms $\H^s(X,\Oscr_X)$. The only other possible differential that hits
    a class of negative degree is
    $d_2:\H^1(X,\Omega^1_X)\rightarrow\H^3(X,\Omega^2_X)$. This differential vanishes
    as all of the classes must be permanent thanks to the hypothesis and the
    trace map $\K_0(X)\rightarrow\HH_0(X/k)$. Now, all remaining classes
    involve terms of total degrees $-2$, $-1$, or $0$. In any case, if one of
    these differentials is non-zero, then the self-duality of $\HH(X/k)$ is
    violated, just as in the proof of Theorem~\ref{thm:main}.
\end{proof}

This hypothesis is satisfied for smooth complete intersections in $\PP^4$
by Deligne's theorem that these have the same Hodge numbers as their
characteristic $0$ counterparts~\cite{sga7-2}*{Expos\'e~XI}.
In this case the result also follows as a special case of Example~\ref{ex:ci}.

%%%%%%%%%%%%%%%%%%%%%
%%% End material. %%%
%%%%%%%%%%%%%%%%%%%%%

\addcontentsline{toc}{section}{References}

\begin{bibdiv}
\begin{biblist}

% \bib{abramovich-polishchuk}{article}{
%     author={Abramovich, Dan},
%     author={Polishchuk, Alexander},
%     title={Sheaves of $t$-structures and valuative criteria for
%     stable
%     complexes},
%     journal={J. Reine Angew. Math.},
%     volume={590},
%     date={2006},
%     pages={89--130},
%     issn={0075-4102},
% %     review={\MR{2208130}},
% %     doi={10.1515/CRELLE.2006.005},
% }

% \bib{antieau-barthel-gepner}{article}{
%     author={Antieau, Benjamin},
%     author={Barthel, Tobias},
%     author={Gepner, David},
%     title={On localization sequences in the algebraic $K$-theory of ring spectra},
%     journal = {ArXiv e-prints},
%     eprint = {http://arxiv.org/abs/1412.4041},
%     year = {2014},
% }

\bib{ag}{article}{
    author={Antieau, Benjamin},
    author={Gepner, David},
    title={Brauer groups and \'etale cohomology in derived
    algebraic
    geometry},
    journal={Geom. Topol.},
    volume={18},
    date={2014},
    number={2},
    pages={1149--1244},
    issn={1465-3060},
%     review={\MR{3190610}},
%     doi={10.2140/gt.2014.18.1149},
}

\bib{bgt2}{article}{
    author={Blumberg, Andrew J.},
    author={Gepner, David},
    author={Tabuada, Gon\c{c}alo},
    title={Uniqueness of the multiplicative cyclotomic trace},
    journal={Adv. Math.},
    volume={260},
    date={2014},
    pages={191--232},
    issn={0001-8708},
%     review={\MR{3209352}},
%     doi={10.1016/j.aim.2014.02.004},
}

\bib{buchweitz-flenner}{article}{
   author={Buchweitz, Ragnar-Olaf},
   author={Flenner, Hubert},
   title={The global decomposition theorem for Hochschild (co-)homology of
   singular spaces via the Atiyah-Chern character},
   journal={Adv. Math.},
   volume={217},
   date={2008},
   number={1},
   pages={243--281},
   issn={0001-8708},
%    review={\MR{2357327}},
%    doi={10.1016/j.aim.2007.06.013},
}

\bib{caldararu-willerton}{article}{
    author={C\u ald\u araru, Andrei},
    author={Willerton, Simon},
    title={The Mukai pairing. I. A categorical approach},
    journal={New York J. Math.},
    volume={16},
    date={2010},
    pages={61--98},
    issn={1076-9803},
%     review={\MR{2657369}},
}

% 
% % \bib{carlsson-products}{article}{
% %     author={Carlsson, Gunnar},
% %     title={On the algebraic $K$-theory of infinite product categories},
% %     journal={$K$-Theory},
% %     volume={9},
% %     date={1995},
% %     number={4},
% %     pages={305--322},
% %     issn={0920-3036},
% % %     review={\MR{1351941}},
% % %     doi={10.1007/BF00961467},
% % }
% 
% % % 
% % % \bib{cartan}{article}{
% % %     author = {Cartan, H.},
% % %     title = {D{\'e}termination des alg{\`e}bres {$\Hoh_*(\pi,n;\ZZ)$}},
% % %     journal = {S{\'e}minaire H. Cartan},
% % %     volume = {7},
% % %     number = {1},
% % %     pages = {11-01--11-24},
% % %     publisher = {Secr{\'e}tariat math{\'e}matique},
% % %     address = {Paris},
% % %     year = {1954/1955},
% % % }
% % % 
% % % \bib{chernousov-panin}{article}{
% % %     author={Chernousov, Vladimir},
% % %     author={Panin, Ivan},
% % %     title={Purity of $G_2$-torsors},
% % %     journal={C. R. Math. Acad. Sci. Paris},
% % %     volume={345},
% % %     date={2007},
% % %     number={6},
% % %     pages={307--312},
% % %     issn={1631-073X},
% % % %     review={\MR{2359087 (2008j:20146)}},
% % % %     doi={10.1016/j.crma.2007.07.018},
% % % }

\bib{cisinski-tabuada}{article}{
    author={Cisinski, Denis-Charles},
    author={Tabuada, Gon\c{c}alo},
    title={Symmetric monoidal structure on non-commutative motives},
    journal={J. K-Theory},
    volume={9},
    date={2012},
    number={2},
    pages={201--268},
    issn={1865-2433},
%     review={\MR{2922389}},
%     doi={10.1017/is011011005jkt169},
}

\bib{sga7-2}{book}{
    author={Deligne, Pierre}
    author={Katz, Nicholas}
    title={Groupes de monodromie en g\'eom\'etrie alg\'ebrique. II},
    series={Lecture Notes in Mathematics, Vol. 340},
    note={S\'eminaire de G\'eom\'etrie Alg\'ebrique du Bois-Marie
    1967--1969 (SGA 7 II); Dirig\'e par P. Deligne et N. Katz},
    publisher={Springer-Verlag, Berlin-New York},
    date={1973},
    pages={x+438},
%     review={\MR{0354657}},
}

\bib{deligne-illusie}{article}{
    author={Deligne, Pierre},
    author={Illusie, Luc},
    title={Rel\`evements modulo $p^2$ et d\'ecomposition du complexe de de
    Rham},
    journal={Invent. Math.},
    volume={89},
    date={1987},
    number={2},
    pages={247--270},
    issn={0020-9910},
%     review={\MR{894379}},
%     doi={10.1007/BF01389078},
}

\bib{gerstenhaber-schack}{article}{
    author={Gerstenhaber, Murray},
    author={Schack, S. D.},
    title={A Hodge-type decomposition for commutative algebra
    cohomology},
    journal={J. Pure Appl. Algebra},
    volume={48},
    date={1987},
    number={3},
    pages={229--247},
    issn={0022-4049},
%     review={\MR{917209}},
%     doi={10.1016/0022-4049(87)90112-5},
}

\bib{hochschild-kostant-rosenberg}{article}{
    author={Hochschild, G.},
    author={Kostant, Bertram},
    author={Rosenberg, Alex},
    title={Differential forms on regular affine algebras},
    journal={Trans. Amer. Math. Soc.},
    volume={102},
    date={1962},
    pages={383--408},
    issn={0002-9947},
%     review={\MR{0142598 (26 \#167)}},
}

\bib{keller-cyclic}{article}{
    author={Keller, Bernhard},
    title={On the cyclic homology of exact categories},
    journal={J. Pure Appl. Algebra},
    volume={136},
    date={1999},
    number={1},
    pages={1--56},
    issn={0022-4049},
%     review={\MR{1667558}},
%     doi={10.1016/S0022-4049(97)00152-7},
}

% 
% \bib{keller-nicolas}{article}{
%     author={Keller, Bernhard},
%     author={Nicol{\'a}s, Pedro},
%     title={Weight structures and simple dg modules for positive dg
%     algebras},
%     journal={Int. Math. Res. Not. IMRN},
%     date={2013},
%     number={5},
%     pages={1028--1078},
%     issn={1073-7928},
% %     review={\MR{3031826}},
% }
% 
% % % 
% % % % \bib{kervaire}{article}{
% % % %     author={Kervaire, Michel A.},
% % % %     title={Some nonstable homotopy groups of Lie groups},
% % % %     journal={Illinois J. Math.},
% % % %     volume={4},
% % % %     date={1960},
% % % %     pages={161--169},
% % % %     issn={0019-2082},
% % % % %     review={\MR{0113237 (22 \#4075)}},
% % % % }
% % \bib{khan-thesis}{article}{
% %     author={Khan, Adeel},
% %     title={The motivic homotopy theory of derived schemes},
% %     note={Forthcoming},
% % }
% 
% \bib{krause}{article}{
%     author={Krause, Henning},
%     title={Deriving Auslander's formula},
%     journal={Doc. Math.},
%     volume={20},
%     date={2015},
%     pages={669--688},
%     issn={1431-0635},
% %     review={\MR{3398723}},
% }

\bib{loday}{book}{
    author={Loday, Jean-Louis},
    title={Cyclic homology},
    series={Grundlehren der Mathematischen Wissenschaften},
    volume={301},
    note={Appendix E by Mar\'\i a O. Ronco},
    publisher={Springer-Verlag, Berlin},
    date={1992},
    pages={xviii+454},
    isbn={3-540-53339-7},
%     review={\MR{1217970}},
%     doi={10.1007/978-3-662-21739-9},
}

\bib{mccarthy-minasian}{article}{
    author={McCarthy, Randy},
    author={Minasian, Vahagn},
    title={HKR theorem for smooth $S$-algebras},
    journal={J. Pure Appl. Algebra},
    volume={185},
    date={2003},
    number={1-3},
    pages={239--258},
    issn={0022-4049},
%     review={\MR{2006429}},
%     doi={10.1016/S0022-4049(03)00089-6},
}

\bib{orlov}{article}{
    author={Orlov, Dmitri},
    title={Smooth and proper noncommutative schemes and gluing of DG
    categories},
    journal={Adv. Math.},
    volume={302},
    date={2016},
    pages={59--105},
    issn={0001-8708},
%     review={\MR{3545926}},
%     doi={10.1016/j.aim.2016.07.014},
}

% % % 
% % % \bib{panin-purity}{article}{
% % %     author={Panin, I. A.},
% % %     title={Purity conjecture for reductive groups},
% % %     journal={Vestnik St. Petersburg Univ. Math.},
% % %     volume={43},
% % %     date={2010},
% % %     number={1},
% % %     pages={44--48},
% % %     issn={1063-4541},
% % % %     review={\MR{2662409 (2011j:14100)}},
% % % %     doi={10.3103/S1063454110010085},
% % % }
% % % 
% % % \bib{parimala-sridharan-2}{article}{
% % %     author={Parimala, R.},
% % %     author={Sridharan, R.},
% % %     title={Nonsurjectivity of the Clifford invariant map},
% % %     note={K. G. Ramanathan memorial issue},
% % %     journal={Proc. Indian Acad. Sci. Math. Sci.},
% % %     volume={104},
% % %     date={1994},
% % %     number={1},
% % %     pages={49--56},
% % %     issn={0253-4142},
% % % %     review={\MR{1280057 (95g:11030)}},
% % % %     doi={10.1007/BF02830873},
% % % }

\bib{mo:16960}{misc}{    
    title={Hochschild Kostant Rosenberg theorem for varieties in positive
    characteristic?},    
    author={Pomerleano, Daniel},
    note={URL: https://mathoverflow.net/q/16960 (version: 2010-03-03)},    
    eprint={https://mathoverflow.net/q/16960},    
    organization={MathOverflow}  
}

\bib{schuhmacher}{article}{
    author={Schuhmacher, Frank},
    title={Hochschild cohomology of complex spaces and Noetherian
    schemes},
    journal={Homology Homotopy Appl.},
    volume={6},
    date={2004},
    number={1},
    pages={299--340},
    issn={1532-0081},
%     review={\MR{2084590}},
}

\bib{shklyarov}{article}{
    author={Shklyarov, D.},
    title={Hirzebruch-Riemann-Roch-type formula for DG algebras},
    journal={Proc. Lond. Math. Soc. (3)},
    volume={106},
    date={2013},
    number={1},
    pages={1--32},
    issn={0024-6115},
%     review={\MR{3020737}},
%     doi={10.1112/plms/pds034},
}

\bib{swan}{article}{
    author={Swan, Richard G.},
    title={Hochschild cohomology of quasiprojective schemes},
    journal={J. Pure Appl. Algebra},
    volume={110},
    date={1996},
    number={1},
    pages={57--80},
    issn={0022-4049},
%     review={\MR{1390671}},
%     doi={10.1016/0022-4049(95)00091-7},
}

% % % 
% % % % \bib{thom}{article}{
% % % %     author={Thom, Ren{\'e}},
% % % %     title={Quelques propri\'et\'es globales des vari\'et\'es
% % % %     diff\'erentiables},
% % % %     journal={Comment. Math. Helv.},
% % % %     volume={28},
% % % %     date={1954},
% % % %     pages={17--86},
% % % % }
% \bib{thomason-trobaugh}{article}{
% author={Thomason, R. W.},
% author={Trobaugh, Thomas},
% title={Higher algebraic $K$-theory of schemes and of derived
% categories},
% conference={
% title={The Grothendieck Festschrift, Vol.\ III},
% },
% book={
% series={Progr. Math.},
% volume={88},
% publisher={Birkh\"auser
% Boston, Boston, MA},
% },
% date={1990},
% pages={247--435},
% %   review={\MR{1106918}},
% %      doi={10.1007/978-0-8176-4576-2_10},
% }
% 
% \bib{thomason-classification}{article}{
%     author={Thomason, R. W.},
%     title={The classification of triangulated subcategories},
%     journal={Compositio Math.},
%     volume={105},
%     date={1997},
%     number={1},
%     pages={1--27},
%     issn={0010-437X},
% %     review={\MR{1436741}},
% %     doi={10.1023/A:1017932514274},
% }

\bib{toen-vaquie}{article}{
    author={To\"en, Bertrand},
    author={Vaqui\'e, Michel},
    title={Moduli of objects in dg-categories},
    journal={Ann. Sci. \'Ecole Norm. Sup. (4)},
    volume={40},
    date={2007},
    number={3},
    pages={387--444},
    issn={0012-9593},
%     review={\MR{2493386}},
%     doi={10.1016/j.ansens.2007.05.001},
}

\bib{tv-simplicial}{article}{
    author={To\"en, Bertrand},
    author={Vezzosi, Gabriele},
    title={Alg\`ebres simpliciales $S^1$-\'equivariantes, th\'eorie de
    de Rham et
    th\'eor\`emes HKR multiplicatifs},
    journal={Compos. Math.},
    volume={147},
    date={2011},
    number={6},
    pages={1979--2000},
    issn={0010-437X},
%     review={\MR{2862069}},
%     doi={10.1112/S0010437X11005501},
}

\bib{tv-bloch}{article}{
    author={To\"en, Bertrand},
    author={Vezzosi, Gabriele},
    title = {G\'eom\'etrie non-commutative, formule des traces et conducteur de Bloch},
    journal = {ArXiv e-prints},
    eprint = {http://arxiv.org/abs/1701.00455},
    year = {2017},
}

\bib{yekutieli}{article}{
   author={Yekutieli, Amnon},
   title={The continuous Hochschild cochain complex of a scheme},
   journal={Canad. J. Math.},
   volume={54},
   date={2002},
   number={6},
   pages={1319--1337},
   issn={0008-414X},
%    review={\MR{1940241}},
%    doi={10.4153/CJM-2002-051-8},
}
		
% \bib{MR1913922}{article}{
%    author={Yekutieli, Amnon},
%    title={Decomposition of the Hochschild complex of a scheme in arbitrary
%    characteristic},
%    journal={Canad. J. Math.},
%    volume={54},
%    date={2002},
%    number={4},
%    pages={866--896},
%    issn={0008-414X},
%    review={\MR{1913922}},
% }

\end{biblist}
\end{bibdiv}

% \vspace{20pt}
% \scriptsize
% \noindent
% Benjamin Antieau\\
% University of Illinois at Chicago\\
% Department of Mathematics, Statistics, and Computer Science\\
% 851 South Morgan Street, Chicago, IL 60607\\
% \texttt{antieau@math.uic.edu}
% 
% \vspace{10pt}
% \noindent
% David Gepner\\
% Purdue University\\
% Department of Mathematics\\
% 150 N. University Street, West Lafayette, IN 47907\\
% \texttt{dgepner@math.purdue.edu}

\end{document}